\newtheorem{theorem}{Theorem}[section]
\newtheorem{corollary}[theorem]{Corollary}
\newtheorem{lemma}[theorem]{Lemma}
\newtheorem{proposition}[theorem]{Proposition}
\newtheorem{conjecture}[theorem]{Conjecture}
\newtheorem{question}[theorem]{Question}
\theoremstyle{definition}
\newtheorem{definition}[theorem]{Definition}
\newtheorem{example}[theorem]{Example}
\theoremstyle{remark}
\newtheorem{remark}[theorem]{Remark}
\numberwithin{equation}{section}
\def\no{\nonumber}
\newcommand{\R}{\mathbb{R}}
\newcommand{\p}{\partial}
\newcommand{\sig}{\Sigma}
\newcommand{\n}{\nabla}
\newcommand{\la}{\langle}
\newcommand{\ra}{\rangle}
\newcommand{\fder}[2]{\frac{\partial #1}{\partial #2}}
\begin{document}
\title[Minimal surfaces and eigenvalue problems]
{Minimal surfaces and eigenvalue problems}
\author{Ailana Fraser}
\address{Department of Mathematics \\
                 University of British Columbia \\
                 Vancouver, BC V6T 1Z2}
\email{afraser@math.ubc.ca}
\author{Richard Schoen}
\address{Department of Mathematics \\
                 Stanford University \\
                 Stanford, CA 94305}
\thanks{The first author was partially supported by  the 
Natural Sciences and Engineering Research Council of Canada (NSERC) and the second
author was partially supported by NSF grant DMS-1105323}
\email{schoen@math.stanford.edu}

\begin{abstract} 
We show that metrics that maximize the $k$-th Steklov eigenvalue on surfaces with boundary arise from free boundary minimal surfaces in the unit ball. We prove several properties of the volumes of these minimal submanifolds. 
For free boundary minimal submanifolds in the ball 
we show that the boundary volume is reduced up to second order under conformal transformations of the ball. For two-dimensional stationary integer multiplicity rectifiable varifolds that are stationary for deformations that preserve the ball, we prove that the boundary length is reduced under conformal transformations.
We also give an overview of some of the known results on extremal metrics of the Laplacian on closed surfaces, and give a survey of our recent results from \cite{FS2} on extremal metrics for Steklov eigenvalues on compact surfaces with boundary. 
\end{abstract}

\maketitle

\section{Introduction}

Given a smooth compact surface $M$, the choice of a Riemannian metric $g$ on $M$ gives a
Laplace operator $\Delta_g$, which has a discrete set of eigenvalues 
\[
     \lambda_0=0<\lambda_1\leq \lambda_2\leq\ldots \leq \lambda_k \leq \ldots \rightarrow \infty.
\]
A basic question is: 
Assuming we fix the area to be $1$, what is the metric that maximizes the first eigenvalue? 
Does such a metric exist? 
If so what can we say about its geometry?
If we assume that we have a smooth metric $g$ that realizes the maximum, then it
turns out that the multiplicity of the eigenvalue is always at least $3$, and the
maximizing condition implies that there are independent eigenfunctions $u_1,\ldots, u_{n+1}$
with the property that $\sum |u_i|^2=1$ on $M$ and the map
$u=(u_1,\ldots,u_{n+1})$ defines a conformal map to $S^n$ with $n\geq 2$ (see \cite{N}).
This implies that the image surface $\Sigma=u(M)$  is a minimal surface in $S^n$;
that is, the mean curvature of $\Sigma$ is zero. Furthermore, the optimal metric $g$ is a positive
constant times the induced metric on $\Sigma$ from $S^n$. 

There are few surfaces for which maximizing metrics are known to exist.
In principle the cases with $\chi(M)\geq 0$ are understood: 

\vspace{2mm}

$\bullet$
For $S^2$ the constant curvature metric is the unique maximum by a result of Hersch \cite{H} from 1970. 

\vspace{2mm}

$\bullet$
For $RP^2$ the constant curvature metric is the unique maximum by a result of
Li and Yau \cite{LY} from the 1980s. The Veronese minimal embedding of $RP^2$ into 
$S^4$ is key. 

\vspace{2mm}

$\bullet$
For $T^2$ the flat metric on the $60^0$ rhombic torus is the unique maximum by a result of Nadirashvili \cite{N} from 1996. It can be minimally embedded into $S^5$ by first eigenfunctions.  
In 2000 El Soufi and Ilias \cite{EI2} showed that the only other smooth critical metric is the flat square torus which can be minimally embedded into $S^3$ as the Clifford torus. 

\vspace{2mm}

$\bullet$
For the Klein bottle the extremal metric is smooth and unique but not flat. This follows from work of Nadirashvili \cite{N} from 1996 on existence of a maximizer, Jakobson, Nadirashvili, and Polterovich \cite{JNP} from 2006 who constructed the metric, and El Soufi, Giacomini, and Jazar \cite{EGJ} from 2006 who proved it is unique. The metric arises on a minimal immersion of the Klein bottle into $S^4$. 

\vspace{2mm}

The case of the torus and the Klein bottle rely on a difficult existence theorem which was posed along with an outlined proof by Nadirashvili \cite{N}.

For large genus one might hope to understand the asymptotic behavior. If we fix
a surface $M$ of genus $\gamma$, then we can define 
\[ 
      \lambda^*(\gamma)=\sup\{\lambda_1(g)A(g):\ g \ \mbox{smooth metric on } M\},
\]
where $A(g)$ denotes the area of $(M,g)$.
Yang and Yau \cite{YY} have shown 
\[ 
      \lambda^*(\gamma)\leq 8\pi \left[\frac{\gamma+3}{2}\right].
\]
It was also shown by the combination of Buser, Berger, and Dodzuik \cite{BBD} and Brooks and Makover \cite{BM} that for large $\gamma$ there is a hyperbolic metric with
$\lambda_1\geq \frac{3}{16}$. This implies the lower bound
\[ 
    \lambda^*(\gamma)\geq \frac{3}{4}\pi(\gamma-1).
\]

It is natural to ask similar questions and look for optimal metrics on surfaces with boundary.
Note that a minimal submanifold $\Sigma^k$ in $S^n$ is naturally the boundary of a minimal
submanifold of the ball, the cone $C(\Sigma)$ over $\Sigma$. 
The coordinate functions of $\mathbb R^{n+1}$ restricted to $C(\Sigma)$ are harmonic functions which are homogeneous of degree $1$, so on the boundary they satisfy $\nabla_\eta x_i=x_i$ where $\eta$ is the outward unit normal vector to $\p C(\Sigma)$. 
More generally, a proper minimal submanifold $\Sigma$ of the unit ball $B^n$ which is orthogonal
to the sphere at the boundary is called a {\it free boundary submanifold}. These are
characterized by the condition that the coordinate functions are Steklov eigenfunctions with eigenvalue $1$; that is, $\Delta x_i=0$ on $\sig$ and $\nabla_\eta x_i=x_i$ on $\p \sig$. 
It turns out that surfaces of this type arise as eigenvalue extremals.

Some examples of free boundary submanifolds in the unit ball are the following:

\begin{example}
Equatorial $k$-planes $D^k \subset B^n$ are the simplest examples of free boundary minimal submanifolds. By a result of Nitsche \cite{Ni} any simply connected free boundary minimal surface in $B^3$ must be a flat equatorial disk. However, if we admit minimal surfaces of a different topological type, there are other examples, as we explain.
\end{example}

\begin{example}
Critical catenoid. Consider the catenoid parametrized on $\mathbb{R} \times S^1$ given by
\[
     \varphi(t,\theta)=(\cosh t \cos \theta, \cosh t \sin \theta, t ).
\]     
For a unique choice of $T_0$ the restriction of $\varphi$ to $[-T_0,T_0]\times S^1$ defines
a minimal embedding into a ball meeting the boundary of the ball orthogonally. 
We may rescale the radius of the ball to $1$ to get the {\it critical catenoid}. 
Explicitly $T_0$ is the unique positive solution of $t=\coth t$.
\end{example}

\begin{example}
Critical M\"obius band. We think of the M\"obius band $M$ as $\mathbb R\times S^1$ with the identification
$(t,\theta)\approx (-t,\theta+\pi)$. There is a minimal embedding of $M$ into $\mathbb R^4$
given by
\[ 
       \varphi(t,\theta)
       =(2\sinh t\cos\theta,2\sinh t\sin\theta,\cosh 2t\cos2\theta,\cosh 2t\sin2\theta).
\]
For a unique choice of $T_0$ the restriction of $\varphi$ to $[-T_0,T_0]\times S^1$ defines
a minimal embedding into a ball meeting the boundary of the ball orthogonally. 
We may rescale the radius of the ball to $1$ to get the {\it critical M\"obius band}. 
Explicitly $T_0$ is the unique positive solution of $\coth t=2\tanh 2t$. See \cite{FS2} for details.
\end{example}

A consequence of the results of \cite{FS2} (see Section \ref{section:survey} for a description) is that for every $k\geq 1$ there exists an embedded free boundary minimal surface in $B^3$ of genus $0$ with $k$ boundary components.
We expect that there are arbitrarily high genus free boundary solutions with three boundary components in $B^3$ which converge to the union of the critical catenoid and a disk through the origin orthogonal to the axis.

As stated above, free boundary submanifolds are characterized by the condition that the coordinate functions are Steklov eigenfunctions with eigenvalue $1$ (\cite{FS1} Lemma 2.2). If $(M,g)$ is a compact Riemannian manifold with boundary, the Steklov eigenvalue problem is:
\[  
       \left\{ \begin{array}{lll}
               \Delta_g u & = \; 0   & \mbox{ on } \; M \\
               \fder{u}{\eta} & = \; \sigma u  & \mbox{ on } \; \p M,
     \end{array} \right.
\]
where $\eta$ is the outward unit normal vector to $\p M$, $\sigma \in \mathbb{R}$, and $u \in C^\infty(M)$. Steklov eigenvalues are eigenvalues of the Dirichlet-to-Neumann map, which sends a given smooth function on the boundary to the normal derivative of its harmonic extension to the interior. The Dirichlet-to-Neumann map is a non-negative, self-adjoint operator with discrete spectrum
\[
     \sigma_0=0<\sigma_1\leq \sigma_2\leq\ldots \leq \sigma_k \leq \ldots \rightarrow \infty.
\]
The first  nonzero Steklov eigenvalue can be characterized variationally as
\[
      \sigma_1=\inf_{\int_{\p M} u=0}
       \frac{\int_M |\n u|^2 \,dv_M}{\int_{\p M} u^2 \,dv_{\p M}}
\]
and in general,
\[
        \sigma_k= \inf \left\{\frac{\int_M |\n u|^2 dv_M}{\int_{\p M} u^2 \, dv_{\p M}} \; : \; 
        \int_{\p M} u \phi_j=0 \mbox{ for } j=0, 1, 2, \ldots, k-1 \; \right\}
\]      
where $\phi_j$ is an eigenfunction corresponding to the eigenvalue $\sigma_j$, for $j=1, \ldots, k-1$.
The Steklov eigenvalues of surfaces satisfy a natural conformal invariance:
\begin{definition}
We say that two surfaces $M_1$ and $M_2$ are {\em $\sigma$-isometric} (resp. {\em $\sigma$-homothetic}) if there is a conformal diffeomorphism $F:M_1\to M_2$ that is an isometry (resp. a homothety) on the boundary; that is, $F^*g_2=\lambda^2 g_1$ and $\lambda=1$ on $\p M_1$ (resp. $\lambda=c$ on $\p M_1$ for some positive constant $c$). 
\end{definition}
Note that,
\begin{itemize}
\item If $M_1$ and $M_2$ are $\sigma$-isometric then they have the same Steklov
eigenvalues.
\item If $M_1$ and $M_2$ are $\sigma$-homothetic then they have the same normalized
Steklov eigenvalues; that is, $\sigma_i(M_1)L(\p M_1)=\sigma_i(M_2)L(\p M_2)$ for each $i$,
where $L(\p M_i)$ denotes the length of $\p M_i$.
\end{itemize}

The outline of the paper is as follows. In Section \ref{section:structure} we discuss the connection between maximizing metrics and minimal surfaces. The main result we prove is that if $g_0$ is a metric that maximizes the $k$-th normalized Steklov eigenvalue $\sigma_k(g)L_g(\p M)$, over all smooth metrics $g$ on a compact surface $M$ with boundary, then $g_0$ is $\sigma$-homothetic to the induced metric on a free boundary minimal surface in $B^n$. This generalizes the case $k=1$ which we proved in \cite{FS2}. If $g_0$ maximizes $\sigma_kL$ in a conformal class of metrics on $M$, we show that $(M,g_0)$ admits a harmonic map into $B^n$ satisfying the free boundary condition, and such that the Hopf differential is real on the boundary. We also give new proofs of the corresponding results for closed surfaces, which do not require analytic approximation. In Section \ref{section:volume} we discuss properties of the volumes of free boundary minimal submanifolds in the ball. We show that free boundary minimal surfaces in the ball maximize their boundary length in their conformal orbit, and for higher dimensional  free boundary minimal submanifolds we show that the boundary volume is reduced up to second order under conformal transformations of the ball. Finally, in Section \ref{section:survey} we give a brief overview of our recent results from \cite{FS2} on existence of maximizing metrics and sharp bounds for the first Steklov eigenvalue.

\section{The structure of extremal metrics} \label{section:structure}

In this section we summarize the connection between extremal metrics and minimal surfaces. Nadirashvili \cite{N} and El Soufi and Ilias \cite{EI2} showed that a metric that maximizes the functional $\lambda_1(g)$ on the set of Riemannian metrics $g$ of fixed area on a compact surface $M$ arises as the induced metric on a minimal surface in a sphere $S^n$. In \cite{EI3}, they extended the result to higher eigenvalues. Below we give a different proof, which follows some of the same arguments, but does not require analytic approximation.

\begin{proposition} \label{prop:extremalclosed}
Let $M$ be a compact surface without boundary, and suppose $g_0$ is a metric on $M$ such that 
\[
      \lambda_k(g_0)A_{g_0}(M)=\max_g \lambda_k(g) A_g(M)
\] 
where the max is over all smooth metrics on $M$. Then there exist independent $k$-th eigenfunctions $u_1, \ldots, u_n$ which, after rescaling the metric, give an isometric minimal immersion $u=(u_1, \ldots, u_n)$ of $M$ into the unit sphere $S^{n-1}$.
\end{proposition}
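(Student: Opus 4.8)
The plan is to derive the conclusion from the first-order variational (criticality) condition satisfied by $g_0$, using the conformal invariance of the Dirichlet energy on surfaces to reduce the problem to an analysis of variations within a conformal class combined with variations of the conformal structure. First I would recall that $\lambda_k$ is not smooth as a function of $g$ because of possible eigenvalue crossings, so I would work with one-sided derivatives: for any smooth path $g_t$ of metrics with $g_0$ at $t=0$, the function $t\mapsto\lambda_k(g_t)A_{g_t}(M)$ has a local max at $t=0$, hence its upper-left and lower-right Dini derivatives straddle zero. Normalizing so that $A_{g_0}(M)$ is fixed along the variation (which costs nothing since the functional is scale-invariant), the standard perturbation formula gives that the one-sided derivative of $\lambda_k$ is governed by $\max$/$\min$ over the $\lambda_k$-eigenspace $E$ of the quadratic form $u\mapsto -\int_M \dot g^{ij}\,\partial_i u\,\partial_j u\,dv + (\text{lower order from }d v_{g_t})$ evaluated on unit-norm eigenfunctions. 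The criticality of $g_0$ then forces: for every symmetric $2$-tensor $h=\dot g$, the linear functional $u\mapsto Q_h(u,u)$ on $E$ cannot be strictly positive (nor strictly negative) definite.

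The next step is to translate this into the minimal-immersion statement. On a surface, write $\dot g = h$ and decompose it into its trace part $h = f g_0 + h_0$ with $h_0$ trace-free. The trace part only rescales area and contributes nothing after normalization, and crucially the conformal factor drops out of $\int_M |\nabla u|^2$, so only the trace-free part $h_0$ enters the energy term; the trace-free symmetric $2$-tensors are exactly the real parts of quadratic differentials. Feeding this back, criticality says there is no trace-free $h_0$ making $Q_{h_0}$ definite on $E$. By a convexity/separation argument (the set of quadratic forms $\{Q_{h_0}|_E\}$ is a linear subspace of the symmetric bilinear forms on the finite-dimensional space $E$, and it meets neither the open cone of positive-definite nor negative-definite forms), there must exist a positive-definite combination in its annihilator: that is, finitely many eigenfunctions $u_1,\dots,u_n\in E$ with $\sum_\alpha \nabla u_\alpha\otimes\nabla u_\alpha$ pointwise proportional (via the trace part that we have not yet used) to $g_0$. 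Running the same separation argument against the trace part pins down the constant and yields $\sum_\alpha |\nabla u_\alpha|^2 = \lambda_k$ and $\sum_\alpha u_\alpha^2 = 1$ on $M$ after rescaling the metric; equivalently $u=(u_1,\dots,u_n)$ is a conformal (hence, being harmonic into the sphere of the right radius, isometric up to scale) minimal immersion into $S^{n-1}$. The minimality is then immediate because the coordinate functions of $u$ are eigenfunctions of $\Delta_{g_0}$ with eigenvalue $\lambda_k$ and $|u|^2=1$, which is precisely the Takahashi characterization of minimal immersions into the sphere.

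I expect the main obstacle to be handling the lack of differentiability of $\lambda_k$ cleanly — in particular, making rigorous the passage from "the one-sided derivatives of $t\mapsto\lambda_k(g_t)A_{g_t}$ straddle zero for all paths" to "the family of quadratic forms $\{Q_h|_E\}$ separates the definite cones," since one must be careful that the relevant eigenspace $E$ used for the perturbation analysis is the full $\lambda_k$-eigenspace of $g_0$ and that the first-order perturbation of the eigenvalue is controlled by the extreme eigenvalues of $Q_h$ restricted to $E$ (this is the Rellich–Kato type formula, valid even at a crossing). The point of the "no analytic approximation" remark in the introduction is presumably that one argues directly with these Dini derivatives rather than perturbing to a metric with simple spectrum; I would follow that route. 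The remaining steps — the Hodge-type decomposition of $h$, the convex separation in a finite-dimensional space, and the Takahashi argument — are all routine once the variational inequality is correctly set up, so I would spend the bulk of the write-up on the first step.
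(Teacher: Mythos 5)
Your overall strategy is the same as the paper's (one-sided derivative analysis at an eigenvalue crossing, indefiniteness of the perturbation form $Q_h$ on the full eigenspace, a convex separation argument, and Takahashi's theorem at the end), but the decisive final step is set up in a way that does not prove the proposition. You decompose $h=fg_0+h_0$ and run the separation argument twice in sequence: once against trace-free $h_0$ to produce eigenfunctions $u_1,\dots,u_n$ with $\sum_\alpha du_\alpha\otimes du_\alpha$ pointwise proportional to $g_0$, and then ``the same separation argument against the trace part'' to conclude $\sum_\alpha u_\alpha^2=1$. These are two separate duality arguments in two different spaces of variations, and each produces its own positive semidefinite form on the eigenspace, i.e.\ its own finite family of eigenfunctions; nothing in your argument forces the two families to coincide. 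Conformality alone does not make $|u|^2$ constant (for a conformal family of $\lambda_k$-eigenfunctions one has $\Delta\bigl(\tfrac12\sum_\alpha u_\alpha^2\bigr)=\lambda_k\sum_\alpha u_\alpha^2-\sum_\alpha|\nabla u_\alpha|^2$, which need not vanish), and the trace variations by themselves only reproduce the weaker conformal-class statement (a possibly different family $v_\beta$ with $\sum_\beta v_\beta^2=1$). The needed coupling is obtained only by separating against all admissible variations simultaneously: the paper does this by taking the convex hull $K$ of the combined tensors $\tau(u)+u^2g_0$, $u\in E_k(g_0)$, and showing $g_0\in K$, so that a single convex combination yields both $\sum_i u_i^2=1$ and $\sum_i du_i\otimes du_i=g_0$ with one set of eigenfunctions. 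Relatedly, your claim that ``the trace part only rescales area and contributes nothing after normalization'' is wrong: a non-constant conformal factor changes the area element and hence the $L^2(M)$-normalization of eigenfunctions, contributing $-\lambda_k\int_M u^2\varphi\,da$ to the derivative; only the constant part of $f$ is killed by the area normalization, and this conformal term is precisely what forces $\sum_i u_i^2$ to be constant.

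A secondary, fixable soft spot is the first-order formula itself. You invoke a ``Rellich--Kato type formula, valid even at a crossing,'' but Rellich's differentiability of eigenvalue branches requires an analytic path; for merely smooth paths the branches need not have one-sided derivatives in that strong sense. The paper's route (the content of its ``no analytic approximation'' remark) is to work with the bottom eigenvalue $\lambda_l$ of the cluster equal to $\lambda_k(g_0)$, prove that $t\mapsto\lambda_l(t)$ is Lipschitz, show that at any point of differentiability $\dot\lambda_l(t_0)=Q_h(u)$ for every unit-norm $u$ in the eigenspace, and then use the fundamental theorem of calculus together with left and right limiting sequences $t_j\to 0$ to extract eigenfunctions $u_\pm\in E_k(g_0)$ with $Q_h(u_+)\le 0\le Q_h(u_-)$. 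If you intend to argue via Dini derivatives instead, you must prove the corresponding two-sided bounds on the difference quotients in terms of the extreme values of $Q_h$ on the eigenspace, rather than cite them; with that and the single combined separation above, your outline becomes the paper's proof.
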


\begin{proof}
First we note that if $l < k$ and $\lambda_l(g_0)=\lambda_k(g_0)$, then $g_0$ maximizes $\lambda_l A$. Let $l$ be such that $\lambda_l(g_0)=\lambda_k(g_0)$ and $\lambda_{l-1}(g_0) < \lambda_l(g_0)$. 
Let $g(t)$ be a family of smooth metrics on $M$ with $g(0)=g_0$ and $\frac{d}{dt}g(t) =h(t)$, where  
$h(t)\in S^2(M)$ is a smooth family of symmetric $(0,2)$-tensor fields on $M$.
Denote by $E_i(g(t))$ the eigenspace corresponding to the $i$-th eigenvalue $\lambda_i(t)$ of $(M,g(t))$.
Define a quadratic form $Q_{h}$ on smooth functions $u$ on $M$ by
\[
      Q_{h}(u)=-\int_M \la \tau(u) + \frac{\lambda_l}{2} u^2g(t), h \ra \; da_t,
\]
where $\tau(u)$ is the stress-energy tensor of $u$ with respect to the metric $g(t)$,
\[
    \tau(u)= du \otimes du -\frac{1}{2} |\n u|^2 g.
\]

\begin{lemma} \label{lemma:deriv-closed}
$\lambda_l(t)$ is a Lipschitz function of $t$, and if $\dot{\lambda}_l(t_0)$ exists, then
\[
       \dot{\lambda}_l(t_0) = Q_{h}(u)
\] 
for any $u \in E_l(g(t_0))$ with $||u||_{L^2}=1$.
\end{lemma}
\begin{proof}
Let 
\[
      \mathcal{E}(t)=\cup_{i=0}^{l-1} E_i(g(t)).
\]
Then $\mathcal{E}(t)$ is of constant dimension $l$ and varies smoothly in $t$ for small $t$. Let
\[
      P_t: L^2(M,g(t))  \rightarrow \mathcal{E}(t)
\]
be the orthogonal projection onto $\mathcal{E}(t)$. 

To see that $\lambda_l(t)$ is Lipschitz for small t, let $t_1\neq t_2$ and assume without
loss of generality that $\lambda_l(t_1)\leq\lambda_l(t_2)$. Now let $u$ be an $l$-th 
eigenfunction for $g(t_1)$ normalized so that $\int_{M}u^2\ da_{t_1}=1$. It then follows easily from the fact that the path $g(t)$ is smooth that
\[ 
      |\int_M|\nabla^{t_1}u|^2\ da_{t_1}-\int_M|\nabla^{t_2}u|^2\ da_{t_2}|\leq c|t_1-t_2|
\]
and
\[ 
     |\int_{M}u^2\ da_{t_1}-\int_{M}u^2\ da_{t_2}|
      \leq c|t_1-t_2|,\ \ |P_{t_1}(u)-P_{t_2}(u)|\leq c|t_1-t_2|.
\]
Therefore we have
\[ 
          |\lambda_l(t_1)-\lambda_l(t_2)|
          =\lambda_l(t_2)-\lambda_l(t_1)
          \leq \frac{\int_M|\nabla^{t_2}(u-P_{t_2}(u))|^2\ da_{t_2}}
                         {\int_{M}(u-P_{t_2}(u))^2\ da_{t_2}}
                      -\int_M|\nabla^{t_1}u|^2\ da_{t_1}\leq c|t_1-t_2|
\]
and $\lambda_l(t)$ is Lipschitz.

Choose $u_0 \in E_l(g(t_0))$ and let  $u(t)=u_0-P_t(u_0)$.
Let
\[
       F(t)=\int_M |\n u(t)|^2 \; da_t -\lambda_l(t) \int_{M} u^2(t) \; da_t.
\]
Then $F(t) \geq 0$, and $F(t_0)=0$, and we have $\dot{F}(t_0)=0$.
Differentiating $F$ with respect to $t$ at $t=t_0$ we therefore obtain
\begin{align*}
    \int_M [\, 2 & \la \n u_0 , \n \dot{u}_0 \ra-\la  du_0 \otimes du_0 
    -\frac{1}{2} |\n u_0|^2 g , h \ra \,] \; da_{t_0} \\
    & = \dot{\lambda_l}(t_0) \int_{M}  u_0^2 \; da_{t_0} 
    + \lambda_l(t_0) \int_{M} [\,2u_0  \dot{u}_0 + \frac{1}{2}u_0^2 \la g, h \ra \,] \; da_{t_0}.
\end{align*}
Since $u_0$ is an $l$-th eigenfunction, we have
\[
     \int_M \la \n u_0, \n \dot{u}_0 \ra \; da_{t_0}  = \lambda_l(t_0) \int_{M} u_0 \, \dot u_0    \; da_{t_0}.    
\]
Using this, and if we normalize $u_0$ so that $||u_0||_{L^2}=1$, we have
\[
      \dot{\lambda}_l(t_0)=-\int_M \la \tau(u) + \frac{\lambda_l}{2} u^2g(t), h \ra \; da_{t_0} 
      = Q_{h}(u_0).
\]
\end{proof}

Let $L^2(S^2(M))$ denote the space of $L^2$ symmetric $(0,2)$-tensor fields on $M$ with respect to the metric $g_0$. 

\begin{lemma} \label{lemma:indefiniteclosed}
For any $\omega \in L^2(S^2(M))$ with $\int_{M} \la g_0, \omega \ra \;da_{g_0}=0$, there exists $u \in E_k(g_0)$ with $||u||_{L^2}=1$ such that $Q_\omega(u)=0$.
\end{lemma}
\begin{proof}
Let $\omega \in L^2(S^2(M))$, and assume that $\int_{M} \la g_0, \omega \ra \; da_{g_0}=0$. 
Since $C^\infty(S^2(M))$ is dense in $L^2(S^2(M))$, there is a sequence $h_i$ in $C^2(S^2(M))$ with $\int_{M} \la g_0, h_i \ra \; da_{g_0}=0$, such that $h_i \rightarrow \omega$ in $L^2$.

Let $g(t)=\frac{A_{g_0}(M)}{A_{g_0+th_i}(M)} (g_0+ th_i)$. 
Then $g(0)=g_0$, $A_{g(t)}(M)=A_{g_0}(M)$, and since
\[
      \frac{d}{dt}\Big|_{t=0} A_{g_0+th_i}(M)=\int_{M} \la g_0,  h_i \ra \; da_{g_0}=0
\]
we have $\frac{dg}{dt}\big|_{t=0} =h_i$.
Given any $\varepsilon >0$, by the fundamental theorem of calculus, 
\[
     \int_{-\varepsilon}^0 \dot{\lambda}_l(t) \; dt = \lambda_l(0) - \lambda_l(-\varepsilon) \geq 0
\]
by the assumption on $g_0$. Therefore there exists $t$, $-\varepsilon < t < 0$, such that 
$\dot{\lambda}_l(t)$ exists and $\dot{\lambda}_l(t) \geq 0$. Let $t_j$ be a sequence of points with $t_j<0$ and $t_j \rightarrow 0$, such that $\dot{\lambda}_l(t_j) \geq 0$. Choose $u_j \in E_l(g(t_j))=E_k(g(t_j))$ with $||u_j||_{L^2}=1$. Then, after passing to a subsequence, $u_j$ converges in $C^2(M)$ to an eigenfunction $u^{(i)}_- \in E_k(g_0)$ with $||u^{(i)}_-||_{L^2}=1$.
Since $Q_{h_i}(u_j)=\dot{\lambda}_l(t_j) \geq 0$, it follows that $Q_{h_i}(u^{(i)}_-) \geq 0$. By a similar argument, taking a limit from the right, there exists $u^{(i)}_+ \in E_k(g_0)$ with $||u^{(i)}_+||_{L^2}=1$, such that $Q_{h_i}(u^{(i)}_+) \leq 0$.

After passing to subsequences, $u^{(i)}_+ \rightarrow u_+$ and $u^{(i)}_- \rightarrow u_-$ in $C^2(M)$, and
\begin{align*}
      Q_\omega(u_+) &= \lim_{i \rightarrow \infty} Q_{h_i}(u^{(i)}_+)\leq 0 \\
      Q_\omega(u_-)  &= \lim_{i \rightarrow \infty} Q_{h_i}(u^{(i)}_-)\geq 0.
\end{align*}
\end{proof}

Without loss of generality, rescale the metric $g_0$ so that $\lambda_k(g_0)=2$.
The remainder of the argument is as in \cite{EI2}. Let $K$ be the convex hull in $L^2(S^2(M))$ of 
\[
        \{ \, \tau(u) + u^2 g_0 \, : \, u \in E_k(g_0) \}.
\]        
We claim that $g_0 \in K$.
If $g_0 \notin K$, then since $K$ is a convex cone which lies in a finite dimensional subspace, the Hahn-Banach theorem implies the existence of $\omega \in L^2(S^2(M))$ that separates $g_0$ from $K$; in particular such that
\begin{align*}
      \int_M \la g_0, \omega \ra \; da_{g_0} & >0, \quad \mbox{and} \\
      \int_M \la \tau(u)+u^2g_0, \omega \ra \; da_{g_0} & <0 
      \quad \mbox{ for all } u \in E_k(g_0) \setminus \{0\}.
\end{align*}
Let 
\[
       \tilde{\omega}
       =\omega-\left(\frac{1}{2A_{g_0}(M)}\int_M \la g_0, \omega \ra \; da_{g_0} \right)g_0.
\]
Then, $\int_M \la g_0, \tilde{\omega} \ra \;da_{g_0}=0$, and
\begin{align*}
      Q_{\tilde{\omega}}(u)
      &= - \int_M \la  \tau(u) + u^2 g_0, \tilde{\omega} \ra \; da_{g_0} \\
      &= -\int_M \la \tau(u) + u^2 g_0, \omega \ra \; da_{g_0}
            + \frac{\int_M \la g_0, \omega \ra \; da_{g_0}}{A_{g_0} (M)} \, \int_M u^2 \; da_{g_0} \\
      &> 0.
\end{align*}
This contradicts Lemma \ref{lemma:indefiniteclosed}. Therefore, $g_0 \in K$, and since $K$ is contained in a finite dimensional subspace, there exist independent eigenfunctions 
$u_1, \ldots, u_n \in E_k(g_0)$ such that
\[
     \sum_{i=1}^n (du_i \otimes du_i -\frac{1}{2} |\n u_i|^2 g_0 + u_i^2 g_0) =g_0
\]
This implies that
$\sum_i u_i^2 = 1$ and 
$\sum_i du_i \otimes du_i = g_0$.
Thus $u=(u_1, \ldots, u_n): M \rightarrow S^{n-1}$ is an isometric minimal immersion. 
\end{proof}

We turn now to the Steklov eigenvalue problem. In \cite{FS2} we proved that metrics that maximize the first Steklov eigenvalue arise from free boundary minimal surfaces in the unit ball. The proof is  more complicated in the Steklov eigenvalue setting because the quadratic form $Q$ has both an interior and a boundary term. Here we extend our result \cite{FS2} for $\sigma_1$ to higher Steklov eigenvalues.

\begin{proposition} \label{prop:extremal}
If $M$ is a surface with boundary, and $g_0$ is a metric on $M$ with
\[
      \sigma_k(g_0)L_{g_0}(\p M)=\max_g \sigma_k(g) L_g(\p M)
\] 
where the max is over all smooth metrics on $M$. Then there exist independent $k$-th eigenfunctions $u_1, \ldots, u_n$ which give a conformal minimal immersion 
$u=(u_1, \ldots, u_n)$ of $M$ into the unit ball $B^n$ such that u(M) is a free boundary solution, and up to rescaling of the metric $u$ is an isometry on $\p M$.
\end{proposition}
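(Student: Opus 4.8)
The plan is to mirror the proof of Proposition \ref{prop:extremalclosed}, but carry along the boundary term in the quadratic form, as was done for $k=1$ in \cite{FS2}. First I would reduce to the case where $l\le k$ is chosen with $\sigma_l(g_0)=\sigma_k(g_0)$ and $\sigma_{l-1}(g_0)<\sigma_l(g_0)$, noting that if $\sigma_l(g_0)=\sigma_k(g_0)$ then $g_0$ also maximizes $\sigma_l L$. For a smooth family $g(t)$ with $g(0)=g_0$ and $\dot g(0)=h$, the appropriate first-variation formula for the Steklov eigenvalue (compare \cite{FS2}) gives, for a normalized eigenfunction $u\in E_l(g(t_0))$ with $\int_{\p M}u^2\,ds=1$,
\[
    \dot\sigma_l(t_0)=\mathcal Q_h(u):=-\int_M\Big\langle \tau(u)-\tfrac12|\n u|^2 g+\, \cdots,\, h\Big\rangle\,da
    \;-\;\tfrac{\sigma_l}{2}\int_{\p M} u^2\,\langle g, h\rangle\big|_{\p M}\,ds,
\]
where $\tau(u)=du\otimes du-\tfrac12|\n u|^2 g$ is the stress-energy tensor; the key point is that in dimension two $\tau(u)$ is trace-free, and the boundary contribution is controlled by the restriction of $h$ to $\p M$. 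As in Lemma \ref{lemma:deriv-closed} one first shows $\sigma_l(t)$ is Lipschitz (using that the Dirichlet-to-Neumann quadratic form and its $L^2(\p M)$ normalization vary Lipschitz-continuously along $g(t)$, together with a spectral projection onto $\bigcup_{i<l}E_i(g(t))$), so that $\dot\sigma_l$ exists a.e.\ and the formula holds there.

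Next, the analogue of Lemma \ref{lemma:indefiniteclosed}: for the relevant class of test tensors $\omega$ (those with the correct normalization so that one can produce an area- or length-preserving deformation), there is $u\in E_k(g_0)$ with $\mathcal Q_\omega(u)=0$. The argument is the same fundamental-theorem-of-calculus-plus-compactness trick: approximate $\omega$ by smooth $h_i$, build a path $g(t)=c_i(t)(g_0+th_i)$ normalized to preserve $L_{g(t)}(\p M)$ so that $\dot g(0)=h_i$, use that $\int_{-\ve}^0\dot\sigma_l\,dt=\sigma_l(0)-\sigma_l(-\ve)\ge0$ by maximality to get a sequence $t_j\uparrow 0$ with $\dot\sigma_l(t_j)\ge0$, pass to a $C^2$-limit of eigenfunctions $u_-\in E_k(g_0)$ with $\mathcal Q_\omega(u_-)\ge0$, do the same from the right to get $u_+$ with $\mathcal Q_\omega(u_+)\le0$, and invoke an intermediate-value argument on a path in the (finite-dimensional) eigenspace to get $\mathcal Q_\omega(u)=0$ for some unit $u$. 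One then runs the Hahn--Banach/convexity argument: after rescaling so that $\sigma_k(g_0)=1$, let $K$ be the closed convex cone in $L^2(S^2(M))$ generated by the tensors $\tau(u)+\big(\text{appropriate } u^2 \text{ term}\big)$ associated to $u\in E_k(g_0)$, with the boundary part recorded as a measure on $\p M$; show that the pair $(g_0, \text{(boundary data)})$ lies in $K$, else a separating tensor $\omega$ would contradict the lemma. Membership in $K$ yields independent $u_1,\dots,u_n\in E_k(g_0)$ with
\[
   \sum_i du_i\otimes du_i = c\, g_0 \ \text{ on } M, \qquad \sum_i u_i^2 = 1 \ \text{ on } \p M,
\]
so $u=(u_1,\dots,u_n)$ is conformal, $|u|\le 1$ with $|u|=1$ on $\p M$, hence maps into $B^n$ with $u(\p M)\subset S^{n-1}$; harmonicity of the $u_i$ and the Steklov condition $\p u_i/\p\eta=\sigma_k u_i=u_i$ (after the rescaling that also makes $c=1$, equivalently $\sigma_k=1$) give that $u(M)$ is a branched free boundary minimal immersion, and conformality plus $\sum du_i\otimes du_i=g_0$ make $u$ an isometry on $\p M$.

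I expect the main obstacle to be the boundary term in the first-variation formula and in the convexity argument: unlike the closed case, where $Q_h(u)$ depends only on the interior $L^2(S^2(M))$ pairing, here the deformation $h$ enters through both $\int_M\langle\cdot,h\rangle$ and $\int_{\p M}u^2\langle g,h\rangle|_{\p M}$, so one must set up the separation in a product space (interior tensors together with boundary densities on $\p M$) and verify that the normalization constraint — that $h$ generates a length-preserving deformation of $\p M$, i.e.\ $\int_{\p M}\langle g_0,h\rangle\,ds=0$ rather than $\int_M\langle g_0,h\rangle\,da=0$ — is the right one to make the Hahn--Banach step go through and to guarantee the limiting eigenfunctions are nontrivial. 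A secondary technical point is ensuring the conclusion $\sum du_i\otimes du_i$ is a constant multiple of $g_0$ on all of $M$ (conformality in the interior), not merely on $\p M$; this follows because the interior component of the extremal identity forces $\tau(u)$-type terms to sum to a multiple of $g_0$, and in two dimensions a sum of rank-one symmetric tensors $du_i\otimes du_i$ proportional to $g_0$ already encodes conformality. Finally, one should remark that possible branch points of the immersion are harmless for the statement as phrased, exactly as in \cite{FS2}.
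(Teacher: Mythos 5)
Your overall strategy is the same as the paper's: reduce to the first index $l$ with $\sigma_l(g_0)=\sigma_k(g_0)$, prove Lipschitz dependence and an a.e.\ derivative formula for $\sigma_l(t)$ with an interior term $-\int_M\la\tau(u),h\ra\,da$ and a boundary term, extract $u_\pm$ from the left and right by the fundamental-theorem-of-calculus argument, and run Hahn--Banach in the product space $L^2(S^2(M))\times L^2(\p M)$ --- which you correctly identify as the essential new ingredient relative to the closed case. One small correction to the setup: the boundary term and the normalization must involve $h(T,T)$, the tangential--tangential component of $h$ on $\p M$ (since the length element varies by $\tfrac12h(T,T)\,ds$), not the full trace $\la g,h\ra$ restricted to $\p M$.

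The genuine problem is in the final convexity step as you describe it. You propose to show that $(g_0,\text{boundary data})$ lies in the convex cone $K$ generated by the pairs $(\tau(u),u^2)$, and to conclude $\sum_i du_i\otimes du_i=c\,g_0$ on $M$ for a constant $c$. But, as you yourself observe, $\tau(u)$ is trace-free in dimension two, so the interior component of every element of $K$ is trace-free, and $g_0$ (which has trace $2$) can never be that component; the separation argument aimed at $(g_0,\cdot)$ cannot succeed. The correct target is the pair $(0,1)$: membership yields independent $u_1,\dots,u_n\in E_k(g_0)$ with $\sum_i\tau(u_i)=0$ on $M$ and $\sum_iu_i^2=1$ on $\p M$. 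The first identity gives only conformality, $\sum_i du_i\otimes du_i=\bigl(\tfrac12\sum_i|\n u_i|^2\bigr)g_0$, with a conformal factor that is in general non-constant --- unlike the closed case, the extremal metric agrees with the induced metric (up to scale) only on the boundary, which is exactly what the statement claims. Consequently the isometry on $\p M$ cannot be read off from a global identity $\sum_i du_i\otimes du_i=g_0$; it follows instead from the Steklov condition: after rescaling so that $\sigma_k(g_0)=1$ one has $\fder{u}{\eta}=u$, hence $\bigl|\fder{u}{\eta}\bigr|^2=|u|^2=1$ on $\p M$, and conformality then forces the tangential derivative to have unit length. With the target of the separation corrected to $(0,1)$ (and the separating functional's boundary component normalized by subtracting its mean, just as you normalize $h$), the rest of your argument goes through as in the paper.
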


\begin{proof}
First we note that if $l < k$ and $\sigma_l(g_0)=\sigma_k(g_0)$, then $g_0$ maximizes $\sigma_l L$. Let $l$ be such that $\sigma_l(g_0)=\sigma_k(g_0)$ and $\sigma_{l-1}(g_0) < \sigma_l(g_0)$. 
Let $g(t)$ be a family of smooth metrics on $M$ with $g(0)=g_0$ and $\frac{d}{dt}g(t) =h(t)$, where  
$h(t)\in S^2(M)$ is a smooth family of symmetric $(0,2)$-tensor fields on $M$.
Denote by $E_i(g(t))$ the eigenspace corresponding to the $i$-th Steklov  eigenvalue $\sigma_i(t)$ of $(M,g(t))$.
Define a quadratic form $Q_{h}$ on smooth functions $u$ on $M$ as follows
\[
      Q_{h}(u)=-\int_M \la \tau(u) , h \ra \; da_t 
         -\frac{\sigma_l(t)}{2} \int_{\p M} u^2 h(T,T) \; ds_t,
\]
where $T$ is the unit tangent to $\p M$ for the metric $g_0$.
\begin{lemma}
$\sigma_l(t)$ is a Lipschitz function of $t$, and if $\dot{\sigma}_l(t_0)$ exists, then
\[
       \dot{\sigma}_l(t_0) = Q_{h}(u)
\] 
for any $u \in E_l(g(t_0))$ with $||u||_{L^2}=1$.
\end{lemma}
\begin{proof}
Let 
\[
      \mathcal{E}(t)=\cup_{i=0}^{l-1} E_i(g(t)).
\]
Then $\mathcal{E}(t)$ is of constant dimension $l$ and varies smoothly in $t$ for small $t$. Let
\[
      P_t: L^2(\p M,g(t))  \rightarrow \mathcal{E}(t)
\]
be the orthogonal projection onto $\mathcal{E}(t)$. 

To see that $\sigma_l(t)$ is Lipschitz for small t, let $t_1\neq t_2$ and assume without
loss of generality that $\sigma_l(t_1)\leq\sigma_l(t_2)$. Now let $u$ be an $l$-th Steklov
eigenfunction for $g(t_1)$ normalized so that $\int_{\partial M}u^2\ ds_{t_1}=1$. It then follows easily from the fact that the path $g(t)$ is smooth that
\[ 
      |\int_M|\nabla^{t_1}u|^2\ da_{t_1}-\int_M|\nabla^{t_2}u|^2\ da_{t_2}|\leq c|t_1-t_2|
\]
and
\[ 
     |\int_{\partial M}u^2\ ds_{t_1}-\int_{\partial M}u^2\ ds_{t_2}|
      \leq c|t_1-t_2|,\ \ |P_{t_1}(u)-P_{t_2}(u)|\leq c|t_1-t_2|.
\]
Therefore we have
\[ 
          |\sigma_l(t_1)-\sigma_l(t_2)|
          =\sigma_l(t_2)-\sigma_l(t_1)
          \leq \frac{\int_M|\nabla^{t_2}(u-P_{t_2}(u))|^2\ da_{t_2}}
                         {\int_{\partial M}(u-P_{t_2}(u))^2\ ds_{t_2}}
                      -\int_M|\nabla^{t_1}u|^2\ da_{t_1}\leq c|t_1-t_2|
\]
and $\sigma_l(t)$ is Lipschitz.

Choose $u_0 \in E_l(g(t_0))$ and let  $u(t)=u_0-P_t(u_0)$.
Let
\[
       F(t)=\int_M |\n u(t)|^2 \; da_t -\sigma_l(t) \int_{\p M} u^2(t) \; ds_t.
\]
Then $F(t) \geq 0$, and $F(t_0)=0$, and we have $\dot{F}(t_0)=0$.
Differentiating $F$ with respect to $t$ at $t=t_0$ we therefore obtain
\begin{align*}
    \int_M [\, 2 & \la \n u_0 , \n \dot{u}_0 \ra-\la  du_0 \otimes du_0 
    -\frac{1}{2} |\n u_0|^2 g , h \ra \,] \; da_{t_0} \\
    & = \dot{\sigma_l}(t_0) \int_{\p M}  u_0^2 \; ds_{t_0} 
    + \sigma_l(t_0) \int_{\p M} [\,2u_0  \dot{u}_0 + \frac{1}{2}u_0^2 h(T,T) \,] \; ds_{t_0}.
\end{align*}
Since $u_0$ is an $l$-th Steklov eigenfunction, we have
\[
     \int_M \la \n u_0, \n \dot{u}_0 \ra \; da_{t_0}  = \sigma_l(t_0) \int_{\p M} u_0 \, \dot u_0    \; ds_{t_0}.    
\]
Using this, and if we normalize $u_0$ so that $||u_0||_{L^2}=1$, we have
\[
      \dot{\sigma}_l(t_0)=-\int_M \la du_0 \otimes du_0 -\frac{1}{2} |\n u_0|^2 g , h \ra \; da_{t_0} 
         -\frac{\sigma_l(t_0)}{2} \int_{\p M} u_0^2 h(T,T) \; ds_{t_0} = Q_{h}(u_0).
\]
\end{proof}

Consider the Hilbert space $\mathcal{H}=L^2(S^2(M)) \times L^2(\p M)$, the space of pairs of $L^2$ symmetric $(0,2)$-tensor fields on $M$ and $L^2$ functions on boundary of $M$ with respect to the metric $g_0$.

\begin{lemma} \label{lemma:indefinite}
For any $(\omega,f) \in \mathcal{H}$ with $\int_{\p M} f \;ds=0$, there exists $u \in E_k(g_0)$ with $||u||_{L^2}=1$ such that $\la (\omega,\frac{\sigma_k(g_0)}{2}f), (\tau(u),u^2) \ra_{L^2}=0$.
\end{lemma}
\begin{proof}
Let $(\omega,f) \in \mathcal{H}$, and assume that $\int_{\p M} f \; ds=0$. 
Since $C^\infty(S^2(M)) \times C^\infty(M)$ is dense in $L^2(S^2(M)) \times L^2(M))$, we can approximate $(\omega, f)$ arbitrarily closely in $L^2$ by a smooth pair $(h,\tilde{f})$ with $\int_{\p M} \tilde{f} \; ds =0$. We may redefine $h$ in a neighbourhood of the boundary to a smooth tensor whose restriction to $\p M$ is equal to the function $\tilde{f}$, and such that the change in the $L^2$ norm is arbitrarily small. In this way, we obtain a smooth sequence $h_i$ with $\int_{\p M} h_i(T,T) \; ds=0$, such that $(h_i, h_i(T,T)) \rightarrow (\omega,f)$ in $L^2$.

Let $g(t)=\frac{L_{g_0}(\p M)}{L_{g_0+th_i}(\p M)} (g_0+ th_i)$. 
Then $g(0)=g_0$, $L_{g(t)}(\p M)=L_{g_0}(\p M)$, and since
\[
      \frac{d}{dt}\Big|_{t=0} L_{g_0+th_i}(\p M)=\int_{\p M} h_i(T,T) \; ds=0
\]
we have $\frac{dg}{dt}\big|_{t=0} =h_i$.
Given any $\varepsilon >0$, by the fundamental theorem of calculus, 
\[
     \int_{-\varepsilon}^0 \dot{\sigma}_l(t) \; dt = \sigma_l(0) - \sigma_l(-\varepsilon) \geq 0
\]
by the assumption on $g_0$. Therefore there exists $t$, $-\varepsilon < t < 0$, such that 
$\dot{\sigma}_l(t)$ exists and $\dot{\sigma}_l(t) \geq 0$. Let $t_j$ be a sequence of points with $t_j<0$ and $t_j \rightarrow 0$, such that $\dot{\sigma}_l(t_j) \geq 0$. Choose $u_j \in E_l(g(t_j))=E_k(g(t_j))$ with $||u_j||_{L^2}=1$. Then, after passing to a subsequence, $u_j$ converges in $C^2(M)$ to an eigenfunction $u^{(i)}_- \in E_k(g_0)$ with $||u^{(i)}_-||_{L^2}=1$.
Since $Q_{h_i}(u_j)=\dot{\sigma}_l(t_j) \geq 0$, it follows that $Q_{h_i}(u^{(i)}_-) \geq 0$. By a similar argument, taking a limit from the right, there exists $u^{(i)}_+ \in E_k(g_0)$ with $||u^{(i)}_+||_{L^2}=1$, such that $Q_{h_i}(u^{(i)}_+) \leq 0$.

After passing to subsequences, $u^{(i)}_+ \rightarrow u_+$ and $u^{(i)}_- \rightarrow u_-$ in $C^2(M)$, and
\begin{align*}
      \la (\omega,\frac{\sigma_k(g_0)}{2}f), (\tau(u_+),u_+^2) \ra_{L^2} 
      &= -\lim_{i \rightarrow \infty} Q_{h_i}(u^{(i)}_+)\geq 0 \\
     \la (\omega,\frac{\sigma_k(g_0)}{2}f), (\tau(u_-),u_-^2) \ra_{L^2}  
     &= -\lim_{i \rightarrow \infty} Q_{h_i}(u^{(i)}_-)\leq 0.
\end{align*}
\end{proof}

Without loss of generality, rescale the metric $g_0$ so that $\sigma_k(g_0)=1$.
Let $K$ be the convex hull in $\mathcal{H}$ of 
\[
        \{ \, (\tau(u), u^2) \, : \, u \in E_k(g_0) \}.
\]        
We claim that $(0,1) \in K$.
If $(0,1) \notin K$, then since $K$ is a convex cone which lies in a finite dimensional subspace, the Hahn-Banach theorem implies the existence of $(\omega,f) \in \mathcal{H}$ such that
\begin{align*}
      \la (\omega,\frac{1}{2}f), (0,1) \ra_{L^2} & >0, \quad \mbox{and} \\
      \la (\omega,\frac{1}{2}f), (\tau(u),u^2) \ra_{L^2} & <0 \quad \mbox{ for all } u \in E_k(g_0) \setminus \{0\}.
\end{align*}
Let $\tilde{f}=f-\frac{1}{L_{g_0}(\p M)}\int_{\p M} f \; ds$. Then, $\int_{\p M} \tilde{f} \;ds=0$, and
\begin{align*}
      \la (\omega, \frac{1}{2}\tilde{f}), (\tau(u),u^2) \ra_{L^2}
      &= \int_M \la \omega, \tau(u) \ra \; da +\frac{1}{2} \int_{\p M} \tilde{f} u^2 \; ds \\
      &=  \int_M \la \omega, \tau(u) \ra \; da + \frac{1}{2}\int_{\p M} f u^2 \; ds 
            - \frac{\int_{\p M} f}{2L_{g_0} (\p M)} \, \int_{\p M} u^2 \; ds \\
      &= \la (\omega,\frac{1}{2}f), (\tau(u),u^2) \ra_{L^2} 
          -\frac{\int_{\p M} u^2 \; ds}{L_{g_0}(\p M)} \; \la (\omega,\frac{1}{2}f),(0,1) \ra_{L^2} \\
      &< 0.
\end{align*}
This contradicts Lemma \ref{lemma:indefinite}. Therefore, $(0,1) \in K$, and since $K$ is contained in a finite dimensional subspace, there exist independent eigenfunctions 
$u_1, \ldots, u_n \in E_k(g_0)$ such that
\begin{align*}
     0 & = \sum_{i=1}^n \tau(u_i) 
     = \sum_{i=1}^n (du_i \otimes du_i -\frac{1}{2} |\n u_i|^2 g_0) \quad \mbox{ on } M \\
     1& = \sum_{i=1}^n u_i^2  \quad \mbox{ on } \p M
\end{align*}
Thus $u=(u_1, \ldots, u_n): M \rightarrow B^n$ is a conformal minimal immersion. Since $u_i$ is a $k$-th Steklov eigenfunction and $\sigma_k(g_0)=1$, we have $\fder{u_i}{\eta}=u_i$ on $\p M$ for $i=1, \ldots, n$. Therefore, 
\[
      \left|\fder{u}{\eta}\right|^2= |u|^2=1 \qquad \mbox{on } \p M,
\]
and since $u$ is conformal, $u$ is an isometry on $\p M$.      
\end{proof}

We conclude this section by mentioning the corresponding results for the problem of maximizing eigenvalues within a conformal class of metrics. El Soufi and Ilias \cite{EI2} showed that if a metric $g$ maximizes $\lambda_k$ among all metrics in its conformal class, then $(M,g)$ admits a harmonic map of constant energy density into a sphere. 
Here we give an alternate proof of this using the method of our proof of Proposition \ref{prop:extremalclosed} above which does not require analytic approximation.

\begin{proposition}
Let $M$ be a compact surface without boundary, and suppose $g_0$ is a metric on $M$ such that 
\[
      \lambda_k(g_0)A_{g_0}(M)=\max_g \lambda_k(g) A_g(M)
\] 
where the max is over all smooth metrics on $M$ in the conformal class of $g_0$. Then there exist independent $k$-th eigenfunctions $u_1, \ldots, u_n$ such that $\sum_{i=1}^n u_i^2=1$. That is, $u=(u_1, \ldots, u_n): (M, g_0) \rightarrow S^{n-1}$ is a harmonic map with constant energy density $e(u)=\lambda_k/2$.
\end{proposition}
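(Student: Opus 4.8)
The plan is to run the argument of Proposition \ref{prop:extremalclosed}, but to restrict all admissible metric variations to the conformal class of $g_0$, i.e.\ to directions $h = \vp\, g_0$ with $\vp \in C^\infty(M)$, and to adjust the target of the convex-hull argument accordingly. The key simplification in dimension two is that the stress-energy tensor $\tau(u) = du \otimes du - \frac12 |\n u|^2 g_0$ is trace-free, so along a conformal direction the interior term of the quadratic form $Q_h$ disappears and only the measure term survives:
\[
   Q_{\vp g_0}(u) = -\int_M \la \tau(u) + \tfrac{\lambda_l}{2} u^2 g_0,\ \vp\, g_0 \ra\, da_{g_0} = -\lambda_l \int_M \vp\, u^2\, da_{g_0},
\]
using $\la g_0, g_0 \ra = 2$. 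As before I would first replace $k$ by the smallest $l$ with $\lambda_l(g_0) = \lambda_k(g_0)$, so that $\lambda_{l-1}(g_0) < \lambda_l(g_0)$ and $g_0$ still maximizes $\lambda_l A$ within the conformal class.

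The one structural change from the closed-surface case is the target of the separation argument: a conformally extremal metric need not satisfy $\sum_i \tau(u_i) = 0$, so instead of placing $g_0$ in a convex hull of tensors I would work in $L^2(M, da_{g_0})$, let $K$ be the convex hull of $\{\, u^2 : u \in E_k(g_0)\,\}$ --- a convex cone in the finite-dimensional span of products of $k$-th eigenfunctions --- and show that the constant function $1$ lies in $K$. If not, Hahn--Banach gives $f \in L^2(M)$ with $\int_M f\, da_{g_0} > 0$ and $\int_M f u^2\, da_{g_0} < 0$ for all nonzero $u \in E_k(g_0)$; subtracting the mean produces $\tilde f$ with $\int_M \tilde f\, da_{g_0} = 0$, whence $\omega := \tilde f\, g_0 \in L^2(S^2(M))$ satisfies $\int_M \la g_0, \omega \ra\, da_{g_0} = 0$ while
\[
   Q_\omega(u) = -\lambda_l \int_M \tilde f\, u^2\, da_{g_0} = -\lambda_l \Big( \int_M f u^2\, da_{g_0} - \frac{\int_M f\, da_{g_0}}{A_{g_0}(M)} \int_M u^2\, da_{g_0} \Big) > 0
\]
for every nonzero $u \in E_k(g_0)$ (both terms contributing positively, since $\lambda_l > 0$). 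This contradicts Lemma \ref{lemma:indefiniteclosed}, which applies verbatim to $\omega$ since it is an $L^2$ symmetric two-tensor with $\int_M \la g_0, \omega \ra\, da_{g_0} = 0$. Hence $1 \in K$, and by Carath\'eodory there are $c_a > 0$ and $v_a \in E_k(g_0)$ with $\sum_a c_a v_a^2 = 1$; absorbing the constants and diagonalizing the Gram matrix of the resulting family exactly as in the proof of Proposition \ref{prop:extremalclosed} yields independent $k$-th eigenfunctions $u_1, \dots, u_n$ with $\sum_{i=1}^n u_i^2 = 1$ on $M$.

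Finally the harmonicity is automatic: applying $\Delta_{g_0}$ to $|u|^2 \equiv 1$ and using $\Delta_{g_0} u_i = -\lambda_k u_i$ gives $0 = -2\lambda_k + 2|\n u|^2$, so $|\n u|^2 \equiv \lambda_k$ and the energy density $e(u) = \frac12 |\n u|^2 \equiv \lambda_k / 2$ is constant; since $\Delta_{g_0} u = -\lambda_k u$ is then parallel to $u$, it is normal to $S^{n-1}$, which is the harmonic map equation for $u : (M, g_0) \to S^{n-1}$. I do not expect a serious obstacle: the two lemmas carrying analytic content --- the Lipschitz/differentiability statement behind Lemma \ref{lemma:deriv-closed} and the indefiniteness statement of Lemma \ref{lemma:indefiniteclosed} --- were established for arbitrary smooth families and arbitrary $L^2$ symmetric two-tensors, so they apply unchanged; the only genuinely new ingredients are the trace-free computation above and the observation that, within a fixed conformal class, the correct target for the separation argument is $1 \in L^2(M)$ rather than $g_0$ in a space of tensors.
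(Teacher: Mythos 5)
Your argument is correct and is essentially the paper's own proof: restrict to conformal variations so that, by trace-freeness of $\tau(u)$, the derivative formula reduces to $Q_\varphi(u)=-\lambda_l\int_M \varphi u^2\,da_{g_0}$, run the indefiniteness argument, and use Hahn--Banach in $L^2(M)$ with $K$ the convex hull of $\{u^2: u\in E_k(g_0)\}$ to place the constant function $1$ in $K$, then read off harmonicity and $e(u)=\lambda_k/2$. One small caveat: Lemma \ref{lemma:indefiniteclosed} does not apply ``verbatim,'' since its proof invokes maximality of $g_0$ over \emph{all} metrics, which is not assumed here; but because your separating direction $\omega=\tilde f\,g_0$ is conformal, you can rerun the lemma's argument with conformal approximations $h_i=f_i g_0$ (the area-normalized family then stays in the conformal class, so conformal maximality suffices), which is precisely what the paper means by ``by the same argument as in Lemma \ref{lemma:indefiniteclosed}.''
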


\begin{proof}
Let $g(t)=e^{\alpha(t)}g_0$ be a smooth family of metrics with $g(0)=g_0$ and $\frac{d}{dt}g(t)=\varphi(t) g_0$. Let $l$ be such that $\lambda_l(g_0)=\lambda_k(g_0)$ and $\lambda_{l-1}(g_0) < \lambda_l(g_0)$. Then, using Lemma \ref{lemma:deriv-closed}, we have that $\lambda_l(t)$ is a Lipschitz function of $t$, and if $\dot{\lambda}_l(t_0)$ exists, then
\[
      \dot{\lambda}_l(t_0)=Q_\varphi(u):=- \lambda_l(g_0) \int_M u^2 \varphi  \; da_{t_0}.
\]
By the same argument as in Lemma \ref{lemma:indefiniteclosed}, for any $f \in L^2(M)$ with $\int_M f \; da_{g_0}=0$, there exists $u \in E_k(g_0)$ with $\|u\|_{L^2}=1$ such that $Q_f(u)=0$.
We may then complete the proof using the Hahn-Banach argument as before. Let $K$ be the convex hull in $L^2(M)$ of $\{ u^2 \; : \; u \in E_k(g_0) \}$. Arguments similar to the proof of Proposition \ref{prop:extremalclosed} imply that the constant function $1$ belongs to $K$. Therefore, there exist $u_1, \ldots, u_n \in E_k(g_0)$ such that
$\sum_{i=1}^n u_i^2=1$. Then $u=(u_1, \ldots, u_n) : M \rightarrow S^2$ is a harmonic map, and $2e(u)u=\Delta u = \lambda_k u$, so $e(u)=\lambda_k/2$.
\end{proof}

For maximizing Steklov eigenvalues in a conformal class of metrics we have the following. The proof is slightly simpler than the case of maximizing $\sigma_kL$ over all metrics, because when restricted to conformal variations of the metric the quadratic form $Q$ does not have an interior term.

\begin{proposition} 
If $M$ is a surface with boundary, and $g_0$ is a metric on $M$ with
\[
      \sigma_k(g_0)L_{g_0}(\p M)=\max_g \sigma_k(g) L_g(\p M)
\] 
where the max is over all smooth metrics on $M$ in the conformal class of $g_0$. Then there exist independent $k$-th eigenfunctions $u_1, \ldots, u_n$ such that $\sum_{i=1}^n u_i^2=1$ on $\p M$. 
That is, $u= (u_1, \ldots, u_n): (M, \p M) \rightarrow (B^n, \p B^n)$ is a harmonic map and the Hopf differential of $u$ is real on $\p M$.
\end{proposition}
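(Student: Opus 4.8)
The plan is to re-run the variational argument of Proposition~\ref{prop:extremal}, restricted now to conformal deformations $g(t)=e^{\alpha(t)}g_0$, where it simplifies considerably: in dimension two the Dirichlet energy $\int_M|\nabla u|^2\,da$ is a conformal invariant, so the Steklov eigenvalues on the conformal class of $g_0$ depend only on the length element of $\partial M$. Writing $\frac{d}{dt}g(t)\big|_{t=0}=\varphi g_0$ with $\varphi\in C^\infty(M)$, and noting that $\tau(u)=du\otimes du-\tfrac12|\nabla u|^2 g$ is trace-free with respect to $g_0$, the interior term $-\int_M\langle\tau(u),\varphi g_0\rangle\,da$ of the quadratic form $Q$ vanishes identically. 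Choosing, as before, $l$ with $\sigma_{l-1}(g_0)<\sigma_l(g_0)=\sigma_k(g_0)$, the argument of the Lemma preceding Lemma~\ref{lemma:indefinite} then yields that $\sigma_l(t)$ is Lipschitz and, wherever $\dot\sigma_l(t_0)$ exists,
\[
\dot\sigma_l(t_0)=Q_\varphi(u):=-\frac{\sigma_l(t_0)}{2}\int_{\partial M}u^2\varphi\,ds_{t_0}
\]
for any $u\in E_l(g(t_0))$ with $\|u\|_{L^2(\partial M)}=1$. The Lipschitz estimate and the $C^2$-convergence of normalized eigenfunctions needed below carry over verbatim from the proof of Lemma~\ref{lemma:indefinite}, since those arguments never used that the variation was non-conformal.

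Next I would prove the conformal analogue of Lemma~\ref{lemma:indefinite}: for every $f\in L^2(\partial M)$ with $\int_{\partial M}f\,ds_{g_0}=0$ there exist $u_\pm\in E_k(g_0)$ with $\|u_\pm\|_{L^2(\partial M)}=1$ and $\int_{\partial M}u_+^2 f\,ds\ge 0\ge\int_{\partial M}u_-^2 f\,ds$. One approximates $f$ in $L^2(\partial M)$ by mean-zero functions $f_i\in C^\infty(M)$, sets $g(t)=\frac{L_{g_0}(\partial M)}{L_{(1+tf_i)g_0}(\partial M)}(1+tf_i)g_0$ (a conformal family with $g(0)=g_0$, $L_{g(t)}(\partial M)=L_{g_0}(\partial M)$, and $\dot g(0)=f_i g_0$, since $\int_{\partial M}f_i\,ds=0$), and invokes the maximality of $g_0$ and the fundamental theorem of calculus to produce, for each $i$, sequences $t_j^\pm\to 0^\mp$ at which $\dot\sigma_l$ exists with $\pm\dot\sigma_l(t_j^\pm)\le 0$. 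Passing to limits (first in $j$, then in $i$) through the eigenfunction convergence gives $u_\pm$ as claimed. This is exactly the scheme of Lemma~\ref{lemma:indefinite} with the interior data dropped, and in fact simpler, since $h_i=f_i g_0$ automatically satisfies $h_i(T,T)=f_i$ on $\partial M$ and needs no modification near the boundary.

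Now rescale $g_0$ so that $\sigma_k(g_0)=1$ and apply the Hahn--Banach separation argument of Proposition~\ref{prop:extremal}, this time in the Hilbert space $L^2(\partial M)$. Let $K$ be the convex cone generated by $\{\,u^2|_{\partial M}:u\in E_k(g_0)\,\}$. If the constant function $1$ did not lie in $K$, a separating functional would give $f\in L^2(\partial M)$ with $\int_{\partial M}f\,ds>0$ and $\int_{\partial M}f u^2\,ds<0$ for all $u\in E_k(g_0)\setminus\{0\}$; replacing $f$ by $\tilde f=f-L_{g_0}(\partial M)^{-1}\int_{\partial M}f\,ds$ produces a mean-zero $\tilde f$ with $\int_{\partial M}\tilde f u^2\,ds<0$ for all such $u$ (using $\int_{\partial M}u^2\,ds>0$), contradicting the previous step with $u_+$. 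Hence $1\in K$, so there are $u_1,\dots,u_n\in E_k(g_0)$, which we may take linearly independent, with $\sum_{i=1}^n u_i^2=1$ on $\partial M$. Setting $u=(u_1,\dots,u_n)$, each $u_i$ is harmonic on $M$ so $u$ is a harmonic map into $\mathbb R^n$; moreover $\Delta|u|^2=2\sum_i|\nabla u_i|^2\ge 0$ with $|u|^2=1$ on $\partial M$ forces $|u|\le 1$ on $M$, so $u\colon(M,\partial M)\to(B^n,\partial B^n)$. Finally, $\partial u_i/\partial\eta=\sigma_k u_i=u_i$ on $\partial M$; in a conformal coordinate $w=x_1+ix_2$ adapted to the boundary (with $\partial M=\{x_2=0\}$ and $x_1$ tangent to $\partial M$), the imaginary part of the Hopf differential $\langle u_w,u_w\rangle$ equals $-\tfrac12\langle u_{x_1},u_{x_2}\rangle=\pm\tfrac12\langle\partial_T u,\partial_\eta u\rangle=\pm\tfrac12\langle\partial_T u,u\rangle=\pm\tfrac14\,\partial_T(|u|^2)=0$ on $\partial M$, so the Hopf differential is real on $\partial M$.

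I do not expect a serious obstacle here: the only genuinely new points are verifying that the Lipschitz and eigenfunction-compactness estimates of Proposition~\ref{prop:extremal} transfer unchanged to conformal families — which they do — and the short boundary computation identifying the free boundary Steklov condition $\partial u/\partial\eta=u$ together with $|u|\equiv 1$ on $\partial M$ with the reality of the Hopf differential. Everything else is a simplification of Proposition~\ref{prop:extremal}, because the interior term of $Q$ is absent, so the relevant convex body lives in $L^2(\partial M)$ rather than in $\mathcal H$.
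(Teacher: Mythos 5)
Your proposal is correct and follows essentially the same route as the paper: restrict the variational argument of Proposition~\ref{prop:extremal} to conformal families (where the interior term of $Q$ vanishes since $\tau(u)$ is trace-free), prove the indefiniteness lemma for mean-zero $f\in L^2(\p M)$, and run the Hahn--Banach separation in $L^2(\p M)$ to get $\sum_i u_i^2=1$ on $\p M$. Your explicit maximum-principle argument that the image lies in $B^n$ and the boundary computation $u_\eta\cdot u_T=\langle u,\p_T u\rangle=\tfrac12\p_T|u|^2=0$ simply spell out details the paper states more briefly.
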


\begin{proof}
Let $g(t)=e^{\alpha(t)}g_0$ be a smooth family of metrics with $g(0)=g_0$ and $\frac{d}{dt}g(t)=\varphi(t) g_0$. Let $l$ be such that $\sigma_l(g_0)=\sigma_k(g_0)$ and $\sigma_{l-1}(g_0) < \sigma_l(g_0)$. Then, using Lemma \ref{lemma:deriv-closed}, we have that $\sigma_l(t)$ is a Lipschitz function of $t$, and if $\dot{\sigma}_l(t_0)$ exists, then
\[
      \dot{\sigma}_l(t_0)=Q_\varphi(u):=- \frac{\sigma_l(g_0)}{2} \int_{\p M} u^2 \varphi  \; ds_{t_0}.
\]
By the same argument as in Lemma \ref{lemma:indefiniteclosed}, for any $f \in L^2(\p M)$ with $\int_{\p M} f \; ds_{g_0}=0$, there exists $u \in E_k(g_0)$ with $\|u\|_{L^2(\p M)}=1$ such that $Q_f(u)=0$.
We may then complete the proof using the Hahn-Banach argument as before. Let $K$ be the convex hull in $L^2(\p M)$ of $\{ u^2 \; : \; u \in E_k(g_0) \}$. Arguments similar to those in the proof of Proposition \ref{prop:extremalclosed} imply that the constant function $1$ belongs to $K$. Therefore, there exist $u_1, \ldots, u_n \in E_k(g_0)$ such that
$\sum_{i=1}^n u_i^2=1$ on $\p M$. Then $u=(u_1, \ldots, u_n) : M \rightarrow B^n$ is a harmonic map, with $u(\p M) \subset B^n$ and meeting $\p B^n$ orthogonally. 
Then $u_\eta \cdot u_T=0$ on $\p M$,
and the Hopf differential of $u$ is real on the boundary of $M$.
\end{proof}

To elucidate the condition described in the previous proposition, we describe the notion of a
$\frac{1}{2}$-harmonic map from $\p M$ to $\p\Omega$ where $M$ is a Riemann surface with boundary and $\Omega$ is a Riemannian manifold with boundary. We say that 
$u:\p M\to\p \Omega$ is {\it $\frac{1}{2}$-harmonic} if there is a harmonic map 
$\hat{u}:M\to\Omega$ with $\hat{u}=u$ on $\p M$ whose energy is stationary with respect to variations which preserve $\Omega$ but do not necessarily preserve $\p \Omega$. This is the 
condition that the normal derivative of the map is parallel to the unit normal of $\p\Omega$ along the image of $u$. This notion has been formulated by Da Lio and Rivi\'ere \cite{DR}
in case $M$ is the unit disk and $\Omega$ is a domain in $\mathbb R^n$, and they have
obtained delicate regularity results for $\frac{1}{2}$-harmonic maps from the unit circle to
the sphere. In case $\Omega$ is a domain in $\mathbb R^n$, and $M$ is the unit disk, such maps arise as critical points for the $H^{1/2}$ energy of the boundary map among maps from $\p M$ to
$\p\Omega$, and that is the reason for the terminology. The conclusions of the proposition are 
equivalent to the condition that $u$ is a $\frac{1}{2}$-harmonic from $\p M$ to the sphere 
$\p B^n$. The results of \cite{DR} will be important for obtaining the regularity of metrics which maximize Steklov eigenvalues on surfaces with a fixed conformal structure.

\section{Volumes of Minimal Submanifolds} \label{section:volume}

In this section we discuss some properties of the volumes of the minimal surfaces that arise in the extremal eigenvalue problems as described in the previous section. In the case of closed surfaces, Li and Yau \cite{LY} proved that minimal surfaces in $S^n$ maximize their area in their conformal orbit, and El Soufi and Ilias \cite{EI1} generalized this result to higher dimensional minimal submanifolds in $S^n$:
\begin{theorem} \label{theorem:lyei}
Let $\sig$ be a compact $k$-dimensional minimal submanifold in $S^n$. Suppose $f \in G \setminus O(n+1)$, where $G$ is the group of conformal transformations of $S^n$. If $\sig$ is not isometric to $S^k$ then
\[
        |f(\sig)| < |\sig|.
\]
\end{theorem}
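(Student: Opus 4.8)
The plan is to exploit the conformal invariance of the Dirichlet energy in dimension $k$ together with the minimality (balancing) condition that characterizes minimal submanifolds of $S^n$. Recall that $\sig\subset S^n$ is minimal if and only if each coordinate function $x_i$ of $\mathbb R^{n+1}$ restricted to $\sig$ satisfies $\Delta_\sig x_i = -k\, x_i$; equivalently, the identity map $\iota:\sig\to S^n$ is harmonic with constant energy density $e(\iota)=k/2$, so $\sig$ is a harmonic (in fact, by the Takahashi-type characterization, eigenmap) image. The quantity I would track is the $k$-energy $E_k(\vp)=\int_\sig |d\vp|^k$ of maps $\vp:\sig\to S^n\subset\mathbb R^{n+1}$; when $\dim\sig=k$ this is a conformal invariant of the domain, and by the pointwise inequality between the $L^k$ norm of $d\vp$ and the Jacobian (with equality exactly at conformal points) one has $E_k(\vp)\ge c_k|\vp(\sig)|$ with equality iff $\vp$ is (weakly) conformal, $c_k=k^{k/2}$.

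First I would set up the comparison: given $f\in G\setminus O(n+1)$, let $\vp=f\circ\iota:\sig\to S^n$. Then $|f(\sig)| = |\vp(\sig)| \le c_k^{-1}E_k(\vp)$. Next I would use the conformal invariance of the $k$-energy: since $f$ is conformal on $S^n$ and $\dim\sig=k$, $E_k(f\circ\iota)=E_k(\iota)$ — this needs the chain rule $|d(f\circ\iota)|$ expressed via the conformal factor of $f$ along $\iota$, and the fact that in dimension $k$ the factor cancels in the $L^k$ integrand against the volume form. Hence $|f(\sig)|\le c_k^{-1}E_k(\iota)$. Finally, since $\iota$ is an \emph{isometric} immersion, it is conformal, so $E_k(\iota)=c_k|\sig|$, giving $|f(\sig)|\le|\sig|$.

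The nontrivial point — and the main obstacle — is the \emph{strictness} of the inequality when $\sig$ is not isometric to $S^k$. This requires analyzing the equality case: $|f(\sig)|=|\sig|$ forces $\vp=f\circ\iota$ to be weakly conformal, hence (being also harmonic, as a conformal transformation of the harmonic isometric immersion $\iota$ — here one uses that harmonicity of maps from surfaces, and more generally the structure, is preserved, or rather that $\vp$ is a conformal critical point) $\vp$ is itself a minimal branched immersion with the same image volume. The cleaner route, following Li-Yau and El Soufi-Ilias, is to instead choose $f$ cleverly: compose with the conformal dilation so that the "center of mass" condition $\int_\sig \vp\, dv_\sig = 0$ holds (a Hersch-type balancing lemma, using that the conformal group acts so as to prescribe the barycenter), and then run the argument so that equality would force $\vp$ to be a minimal isometric immersion by first coordinate eigenfunctions with eigenvalue $k$, i.e. $\sig$ would be totally geodesic $S^k$. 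I would therefore structure the proof as: (i) Hersch balancing to normalize $f\circ\iota$ to have zero barycenter; (ii) the conformal-invariance plus Jacobian inequality chain above to get $|f(\sig)|\le|\sig|$; (iii) equality analysis: weak conformality of $\vp$ plus the eigenvalue characterization of minimality, combined with the fact that $\vp$ and $\iota$ induce proportional metrics only if the induced metric on $\sig$ is that of a round $S^k$, yielding the contradiction. Step (iii) is where care is needed, since one must rule out branch points and use the unique continuation / classification that a $k$-dimensional submanifold of $S^n$ which is minimal and conformally equivalent to $S^k$ with the round metric must be $S^k$ itself.

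Minor technical points I would not belabor: the change-of-variables identity $E_k(f\circ\iota)=E_k(\iota)$ only needs $f$ conformal and $\dim\sig=k$ and holds for immersions as written; the inequality $|d\vp|^k\ge k^{k/2}\,|\mathrm{Jac}\,\vp|$ is AM-GM on the singular values of $d\vp$ with equality iff all singular values are equal, i.e. $\vp$ weakly conformal; and the balancing lemma is the standard argument that the map $G/O(n+1)\to B^{n+1}$ sending a conformal class representative to the barycenter of $\vp_*(dv_\sig)$ is a proper map onto the open ball, hence surjective, so some $f$ achieves barycenter $0$.
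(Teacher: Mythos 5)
The paper does not actually prove Theorem \ref{theorem:lyei}; it quotes it from Li--Yau \cite{LY} and El Soufi--Ilias \cite{EI1}. So your argument must stand on its own, and its central step is not correct. The ``conformal invariance of the $k$-energy'' is invariance under conformal changes of the \emph{domain} metric of a $k$-dimensional domain; it is not invariance under postcomposition with a conformal diffeomorphism of the \emph{target}. Writing $f^*g_{S^n}=\mu^2 g_{S^n}$, the map $\varphi=f\circ\iota$ is itself a conformal immersion, $(f\circ\iota)^*g_{S^n}=(\mu\circ\iota)^2 g$, so $|d\varphi|^k=k^{k/2}(\mu\circ\iota)^k$ and
\[
   E_k(f\circ\iota)=k^{k/2}\int_\Sigma (\mu\circ\iota)^k\,dv = k^{k/2}\,|f(\Sigma)|,
   \qquad E_k(\iota)=k^{k/2}\,|\Sigma|.
\]
Hence your claimed identity $E_k(f\circ\iota)=E_k(\iota)$ is literally the assertion $|f(\Sigma)|=|\Sigma|$: the step is circular, and as an equality it is false, since the theorem asserts strict inequality. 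The AM--GM step contributes nothing, because $\varphi$ is conformal and that inequality is an equality for conformal maps. A symptom of the problem is that nowhere in this chain do you use the minimality of $\Sigma$, without which the conclusion is certainly false.

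The fallback route via Hersch balancing also points in the wrong direction. Balancing $f$ so that $\int_\Sigma f\circ\iota\,dv=0$ and using the coordinates of $f\circ\iota$ as test functions gives, with H\"older's inequality, $\lambda_1(\Sigma)\,|\Sigma|\le k\int_\Sigma\mu^2\,dv\le k\,|f(\Sigma)|^{2/k}\,|\Sigma|^{1-2/k}$, i.e.\ a \emph{lower} bound for $|f(\Sigma)|$ in terms of $\lambda_1$. That is how one bounds $\lambda_1$ by the conformal volume; it cannot show that the conformal volume of a minimal immersion equals its volume. In the correct proof, minimality enters through the first variation formula (stationarity of $\Sigma$) applied to an explicit vector field built from the conformal factor of $f$ --- exactly the mechanism the paper uses for the boundary analogue, Theorem \ref{theorem:length}, where one takes $V=\frac{x-y}{|x-y|^2}$ and checks $\mbox{div}_\Sigma V\ge 0$, and the same mechanism that appears in Theorem \ref{theorem:2nd}. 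To repair your argument you would need to replace steps (ii) and (iii) wholesale by such a first-variation computation; the strict inequality and the rigidity statement (``$\Sigma$ isometric to $S^k$'') then come from analyzing when the resulting divergence term vanishes identically, not from a branched-conformal-map classification.
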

This result in the two-dimensional case plays a key role in the characterization of the maximizing metrics for $\lambda_1$ on the torus and Klein bottle (see \cite{MR}, \cite{N}, \cite{JNP}).
The corresponding question for free boundary minimal submanifolds in the ball remains open. 
\begin{question} \label{question:boundaryvolume}
Suppose $\sig$ is a $k$-dimensional free boundary minimal submanifold in $B^n$, and $f: B^n \rightarrow B^n$ is conformal. Is it true that $|f(\sig)| \leq | \sig|$?
\end{question}
On the other hand, for $k=2$, 
we have the following result on conformal images of {\em boundaries} of free boundary minimal surfaces. We proved this in \cite{FS1} (Theorem 5.3) using different methods.

\begin{theorem} \label{theorem:length}
Let $\Sigma$ be a minimal surface in $B^n$ with $\p \Sigma \subset \p B^n$, meeting $\ B^n$ orthogonally. 
Suppose $f: B^n \rightarrow B^n$ is conformal. Then
\[
      |f(\p\Sigma)| \leq |\p\Sigma|.
\]
\end{theorem}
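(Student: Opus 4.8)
The plan is to adapt the El Soufi--Ilias argument for the closed case (Theorem \ref{theorem:lyei}) to the free boundary/Steklov setting, with the role of ``$1$ is an eigenvalue of a minimal submanifold of $S^n$'' replaced by ``the coordinate functions of $\Sigma$ are Steklov eigenfunctions with eigenvalue $1$.'' Write $\iota=(x_1,\dots,x_n):\Sigma\to B^n$ for the inclusion; since $\Sigma$ meets $\partial B^n$ orthogonally, $\Delta_\Sigma x_i=0$ on $\Sigma$, $\partial x_i/\partial\eta=x_i$ on $\partial\Sigma$, and $\sum_i x_i^2=1$ on $\partial\Sigma$. The conformal group $G$ of $B^n$ contains $O(n)$, which preserves boundary lengths, and modulo $O(n)$ it is parametrized by $B^n$: for $b\in B^n$ let $F_b\in G$ be the Möbius transformation with $F_0=\mathrm{id}$ whose conformal factor on $\partial B^n$ at $x$ is $\frac{1-|b|^2}{|x-b|^2}$. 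Using $O(n)$-invariance of boundary length (and that rotations preserve the class of free boundary minimal surfaces), it suffices to show that
\[
      \Phi(b):=|F_b(\partial\Sigma)|=\int_{\partial\Sigma}\frac{1-|b|^2}{|x-b|^2}\,ds
\]
attains its maximum on $B^n$ at $b=0$, where $\Phi(0)=|\partial\Sigma|$.

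\emph{First variation.} A direct computation gives $\n\Phi(0)=2\int_{\partial\Sigma}x\,ds$. Using the free boundary condition and harmonicity of $x_i$ on the minimal surface $\Sigma$,
\[
     \int_{\partial\Sigma}x_i\,ds=\int_{\partial\Sigma}\frac{\partial x_i}{\partial\eta}\,ds=\int_\Sigma\Delta_\Sigma x_i\,da=0,
\]
so $b=0$ is a critical point of $\Phi$.

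\emph{Second variation --- the heart of the argument.} One computes that for a unit vector $a$,
\[
      \mathrm{Hess}\,\Phi(0)(a,a)=8\int_{\partial\Sigma}\la a,x\ra^2\,ds-4|\partial\Sigma|,
\]
so non-positivity of the Hessian is equivalent to $\int_{\partial\Sigma}\la a,x\ra^2\,ds\le\tfrac12|\partial\Sigma|$. To prove this, set $\varphi=\la a,\iota\ra=\sum_i a_ix_i$. Then $\varphi$ is harmonic on $\Sigma$ and $\partial\varphi/\partial\eta=\varphi$ on $\partial\Sigma$, so Green's identity gives $\int_{\partial\Sigma}\varphi^2\,ds=\int_{\partial\Sigma}\varphi\,\frac{\partial\varphi}{\partial\eta}\,ds=\int_\Sigma|\n\varphi|^2\,da$; and since $\iota$ is an isometric immersion, pointwise $|\n\varphi|^2=|P_{T\Sigma}a|^2\le|a|^2=1$, whence $\int_{\partial\Sigma}\varphi^2\,ds\le|\Sigma|$. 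Summing over $a$ in an orthonormal basis, and using $\sum_i x_i^2=1$ on $\partial\Sigma$ together with $\sum_i|\n x_i|^2=2$ on $\Sigma$, yields $|\partial\Sigma|=2|\Sigma|$; feeding this back gives $\int_{\partial\Sigma}\la a,x\ra^2\,ds\le|\Sigma|=\tfrac12|\partial\Sigma|$ for every unit $a$. Hence $\mathrm{Hess}\,\Phi(0)\le0$ (strictly unless $\Sigma$ is a flat equatorial disk). This step is the place where the Steklov eigenvalue characterization of the free boundary condition does all the work, and it is clean.

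\emph{Globalization --- the expected main obstacle.} The previous steps only give that $b=0$ is a local maximum of $\Phi$, and one must upgrade this to a global maximum. I would restrict $\Phi$ to the one-parameter subgroups of $G$: for a unit $e\in\partial B^n$ the dilations toward $e$ give $V(s):=\Phi((\tanh s)e)=\int_{\partial\Sigma}(\cosh2s-\la x,e\ra\sinh2s)^{-1}\,ds$, and every $F_b$ lies on one such ray. Here $V(0)=|\partial\Sigma|$, $V'(0)=0$ and $V''(0)\le0$ by Steps 1--2, and $V(s)$ tends as $|s|\to\infty$ to a limit controlled by the local geometry of the curve $\partial\Sigma\subset\partial B^n$ and by a lower bound on $|\partial\Sigma|$ for free boundary minimal surfaces. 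The remaining task is to exclude any overshoot --- e.g.\ by analyzing the sign of $V''(s)$ along the flow (the transported curve $F_s(\partial\Sigma)$ bounds $F_s(\Sigma)$, which is no longer minimal, so the clean inequality of Step 2 must be replaced by a version with mean-curvature error terms that one shows are harmless), or by a maximality/Hahn--Banach argument parallel to the closed case. Making this passage from the infinitesimal statement to the global one rigorous is, I expect, the main technical difficulty; everything preceding it is elementary given Proposition-style facts about Steklov eigenfunctions already in hand.
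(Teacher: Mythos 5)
Your Steps 1--2 are correct, but they only establish that the identity is a critical point of $\Phi$ with nonpositive Hessian, i.e.\ that the boundary length is reduced \emph{to second order} under conformal maps. That is exactly the content of Theorem \ref{theorem:2nd} of the paper (specialized to $k=2$), not of Theorem \ref{theorem:length}. The entire strength of the theorem lies in the globalization step, which you explicitly leave open, and your proposed repairs do not close it: restricting to the rays $b=(\tanh s)e$ and trying to control the sign of $V''(s)$ runs into the problem you yourself identify --- $F_s(\Sigma)$ is no longer minimal, the coordinate functions are no longer Steklov eigenfunctions of eigenvalue $1$, and there is no available mechanism to show the ``mean-curvature error terms'' are harmless. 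The fact that in higher dimensions the paper can only prove the second-order statement and must leave the global inequality as a conjecture is a strong indication that passing from your infinitesimal computation to the global bound is not a routine matter; so as it stands the proposal has a genuine gap at its main step.

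The paper's actual proof bypasses second variations entirely and is a one-line global argument: writing the boundary conformal factor of $f$ as $u(x)=\frac{|y|^2-1}{|x-y|^2}$ for some $y$ with $|y|>1$, one applies the first variation formula $\int_\Sigma \operatorname{div}_\Sigma V=\int_{\partial\Sigma}V\cdot x\,ds$ (valid because the outward conormal is the position vector) to the single vector field $V=\frac{x-y}{|x-y|^2}$. For a $2$-dimensional surface one has pointwise $\operatorname{div}_\Sigma V=\frac{2}{|x-y|^2}-\frac{2|(x-y)^\top|^2}{|x-y|^4}\ge 0$ (this is where $k=2$ is used), while on $\partial B^n$ one checks $V\cdot x=\frac12(1-u)$; hence $0\le\int_{\partial\Sigma}(1-u)\,ds=|\partial\Sigma|-|f(\partial\Sigma)|$. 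The key missing idea in your approach is precisely this choice of test vector field, which converts the global conformal-image inequality into a sign condition on a tangential divergence, requires no second-order analysis, and (as the paper remarks) even extends to stationary integer multiplicity rectifiable varifolds. If you want to salvage your route, you would need an argument showing $\Phi$ has no interior critical points other than maxima or some convexity along the conformal flow, neither of which is currently in sight.
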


\begin{proof} 
We use a first variation argument. Recall that the first variation formula is 
\[ 
        \int_\Sigma \mbox{div}_\Sigma(V)=\int_{\p\Sigma}V\cdot x\ ds
\]
where $\mbox{div}_\Sigma(V)=\sum_i \nabla_{e_i}V\cdot e_i$ in an orthonormal tangent basis.

Let $u$ be the length magnification factor of $f$; that is, $f^*(\delta)=u^2\delta$ where $\delta$
is the Euclidean metric. It can be checked that there is a $y\in{\mathbb R}^n$ with $|y|>1$
so that $u(x)=\frac{|y|^2-1}{|x-y|^2}$. If we make the choice 
\[ 
       V=\frac{x-y}{|x-y|^2}
\]
in the first variation we can check that $\mbox{div}_\Sigma V\geq 0$ and 
$V\cdot x=\frac{1}{2}(1-u)$ on $\p\Sigma$. It then follows that
\[ 
      0\leq\int_{\p\Sigma}(1-u)\ ds=|\p\Sigma|-|f(\p\Sigma)|.
\] 
\end{proof}

\begin{remark} 
The proof above uses only the first variation, and thus works
for integer multiplicity rectifiable varifolds which are stationary for deformations which preserve the
ball. This is the class of objects which are produced by Almgren \cite{A} in the min/max theory for
the variational problem.
\end{remark}

A lower bound on areas for free boundary surfaces follows from the boundary length decrease under conformal maps of the ball since such maps can be chosen which blow up any chosen point $x \in \p \Sigma$ and the limit is an equatorial disk with area $\pi$ (for details see Theorem 5.4 in \cite{FS1}).

\begin{theorem} 
Let $\Sigma^2$ be a minimal surface in $B^n$ with $\p\Sigma \subset \p B^n$,
meeting $\p B^n$ orthogonally. Then
\[
        |\Sigma| \geq \pi.
\]
Equivalently, $| \p\Sigma| \geq 2\pi$, since $|\p\Sigma|=2|\Sigma|$.
\end{theorem}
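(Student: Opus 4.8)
The plan is to deduce the area bound from Theorem \ref{theorem:length} by degenerating the conformal maps $f:B^n\to B^n$ so that the image boundary becomes an equatorial circle, and simultaneously to establish the relation $|\p\Sigma|=2|\Sigma|$ from the first variation formula. Let me take the two ingredients in turn.

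First I would prove $|\p\Sigma|=2|\Sigma|$. Apply the first variation formula recorded in the proof of Theorem \ref{theorem:length} with the position vector field $V=x$. Since $\Sigma$ is minimal, $\mathrm{div}_\Sigma(x)=k=2$ at every point (it is the trace of the identity on the $2$-dimensional tangent space, the normal component contributing nothing because the mean curvature vanishes), so the left side is $2|\Sigma|$. On the boundary, the free boundary condition says $x=\eta$ is the outward unit normal to $\p B^n$ restricted to $\Sigma$, and this is tangent to $\Sigma$ and normal to $\p\Sigma$; hence $x\cdot x=|x|^2=1$ on $\p\Sigma$, and the right side is $\int_{\p\Sigma}x\cdot x\,ds=|\p\Sigma|$. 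This gives $2|\Sigma|=|\p\Sigma|$, so it suffices to show $|\p\Sigma|\geq 2\pi$.

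Next I would exploit Theorem \ref{theorem:length}: for every conformal $f:B^n\to B^n$ we have $|f(\p\Sigma)|\leq|\p\Sigma|$, so it is enough to exhibit a sequence of conformal maps $f_j$ with $|f_j(\p\Sigma)|\to 2\pi$. Fix any point $p\in\p\Sigma\subset\p B^n$ (nonempty since $\Sigma$ is a nonempty compact minimal surface with boundary — if $\p\Sigma=\emptyset$ the surface would be closed and minimal in $B^n$, impossible). Using the explicit magnification factor $u_y(x)=\tfrac{|y|^2-1}{|x-y|^2}$ from the proof of Theorem \ref{theorem:length}, take $y=y_j\to p$ with $|y_j|>1$; the associated conformal dilations concentrate all of $\p B^n$, except the point diametrically opposite the limit of $y_j/|y_j|$, near $-p$, and the image $f_j(\Sigma)$ converges (as a varifold, or in the Hausdorff/measure sense on compact subsets of the open ball away from $-p$) to a minimal surface passing through the origin with boundary in $\p B^n$. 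Since $f_j(\Sigma)$ is again a free boundary minimal surface and the limit is area-minimizing with the appropriate monotonicity, the standard argument (as in Theorem 5.4 of \cite{FS1}) identifies the limit as an equatorial disk $D^2$, whose boundary is a great circle of length $2\pi$. Combining, $2|\Sigma|=|\p\Sigma|\geq\limsup_j|f_j(\p\Sigma)|=|\p D^2|=2\pi$, hence $|\Sigma|\geq\pi$.

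The main obstacle is the convergence step: making rigorous that the conformally rescaled surfaces $f_j(\Sigma)$ converge to an equatorial disk and that the boundary length passes to the limit. One must control that no length escapes to the bad point $-p$ and that the multiplicity of the limit is one; this is where monotonicity of the mass ratio for free boundary stationary varifolds (valid for the rectifiable varifold class, per the Remark after Theorem \ref{theorem:length}) together with the fact that through an interior point of $B^n$ the density is at least $1$ and the only free boundary minimal surface through the center with density exactly one is the equatorial disk, does the work. Since the excerpt explicitly defers these details to Theorem 5.4 of \cite{FS1}, I would cite that and only indicate the first-variation computation $2|\Sigma|=|\p\Sigma|$ in full.
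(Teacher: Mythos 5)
Your proposal is correct and follows essentially the same route as the paper: the identity $|\p\Sigma|=2|\Sigma|$ via the first variation with $V=x$, combined with Theorem \ref{theorem:length} applied to a sequence of conformal maps blowing up a boundary point, whose images converge to an equatorial disk of boundary length $2\pi$. The paper likewise defers the convergence details of the blow-up to Theorem 5.4 of \cite{FS1}, so your treatment matches its level of rigor as well as its strategy.
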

When $\p\Sigma$ is a single curve this result follows from work of Croke and Weinstein \cite{CW} on a sharp lower length bound on `balanced' curves.

\begin{corollary}
The sharp isoperimetric inequality holds for free boundary minimal surfaces in the ball:
\[
         |\sig|  \leq \frac{|\p \sig|^2}{4 \pi}.
\]
\end{corollary}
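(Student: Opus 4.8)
The plan is to deduce this immediately from the preceding area lower bound, using the elementary identity $|\p\sig| = 2|\sig|$ that holds for any free boundary minimal surface in the ball. First I would record that identity by applying the first variation formula to the position vector field $V = x$: in an orthonormal tangent frame $\mbox{div}_\sig(x) = \sum_i \n_{e_i}x\cdot e_i = \sum_i e_i\cdot e_i = 2$ since $\sig$ is two-dimensional, while on $\p\sig\subset\p B^n$ we have $x\cdot x = |x|^2 = 1$; hence
\[
    2|\sig| = \int_\sig \mbox{div}_\sig(x) = \int_{\p\sig} x\cdot x\,ds = |\p\sig|.
\]
(This is the relation already quoted in the statement of the previous theorem.)

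Next I would substitute this into the claimed inequality:
\[
    \frac{|\p\sig|^2}{4\pi} = \frac{(2|\sig|)^2}{4\pi} = \frac{|\sig|^2}{\pi},
\]
so that $|\sig|\leq \frac{|\p\sig|^2}{4\pi}$ is equivalent to $|\sig|\geq\pi$, which is exactly the theorem stated just above. Equality holds precisely when $|\sig| = \pi$, i.e.\ (tracing the equality case of the area bound back to Theorem \ref{theorem:length} and the blow-up to an equatorial disk) when $\sig$ is an equatorial disk, which is the sense in which the isoperimetric inequality is sharp.

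There is no genuine obstacle here: all of the content lives in the area bound $|\sig|\geq\pi$, whose proof rests on the boundary-length monotonicity of Theorem \ref{theorem:length} applied along a family of conformal dilations of $B^n$ concentrating at a boundary point, whose limit is an equatorial disk of area $\pi$. The only point worth writing out with any care is the identity $|\p\sig| = 2|\sig|$, and even that is the one-line first-variation computation above, once one observes that $\mbox{div}_\sig x \equiv 2$ in dimension two.
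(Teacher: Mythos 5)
Your proof is correct and is essentially the paper's intended argument: the corollary is stated as an immediate consequence of the area lower bound $|\sig|\geq\pi$ together with the identity $|\p\sig|=2|\sig|$ (which the paper records in the preceding theorem and which you correctly verify via the first variation applied to $V=x$). The substitution showing $|\sig|\leq |\p\sig|^2/(4\pi)$ is equivalent to $|\sig|\geq\pi$ is exactly right.
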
  

It is natural to conjecture that 
Theorem \ref{theorem:length} holds 
for higher dimensional free boundary minimal submanifolds in the ball:

\begin{conjecture} 
If $\Sigma$ is a $k$ dimensional minimal submanifold in the ball $B^n$ with $\p\Sigma\subset S^{n-1}$ and with the conormal vector of $\Sigma$ equal to the position vector, then for any conformal transformation $f$ of the ball we have $|f(\p\Sigma)|\leq |\p\Sigma|$.
\end{conjecture}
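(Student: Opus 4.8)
The plan is to follow the first-variation philosophy of the proof of Theorem~\ref{theorem:length}, but to apply it along the full one-parameter group of conformal transformations rather than at a single map $f$. In the $k=2$ case one uses that the $(k-1)=1$-dimensional boundary $\p\Sigma$ has its length multiplied pointwise by $u$, so $|f(\p\Sigma)|=\int_{\p\Sigma}u\,ds$ is \emph{linear} in $u$, and the single vector field $V=(x-y)/|x-y|^2$ — which satisfies $\mathrm{div}_\Sigma V\ge 0$ on every minimal $\Sigma$ and $V\cdot x=\tfrac12(1-u)$ on $S^{n-1}$ — converts the first-variation identity directly into the inequality. For $k\ge 3$ the boundary is $(k-1)$-dimensional and $|f(\p\Sigma)|=\int_{\p\Sigma}u^{k-1}\,d\mathcal H^{k-1}$ is no longer linear in $u$; a direct search for a vector field $W$ with $\mathrm{div}_\Sigma W\ge 0$ and $W\cdot x=c\,(1-u^{k-1})$ on $S^{n-1}$ does not produce a closed form — the natural candidates built from $\n|x-y|^{2-k}$ give $W\cdot x$ proportional to $|x-y|^{2-k}(1-u)$ rather than to $1-u^{k-1}$ — and this is precisely where the one-shot argument breaks.

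Instead, I would fix a unit vector $\nu$ and use the conformal Killing field $V_\nu(x)=(1+|x|^2)\nu-2(x\cdot\nu)x$ of $B^n$ (translation minus special conformal), which is tangent to $S^{n-1}$ and restricts there to twice the tangential part of $\nu$; its flow together with rotations generates the conformal group, so any conformal $f$ is $f_1$ for a path $f_t$, $t\in[0,1]$, with $f_0=\mathrm{id}$ and time-$t$ generator a conformal Killing field of $B^n$. Set $G(t)=|f_t(\p\Sigma)|=\int_{\p\Sigma}u_t^{\,k-1}\,d\mathcal H^{k-1}$, $u_t$ the magnification of $f_t$. The first step is to record that $G'(0)=0$: the restriction of the generator to $S^{n-1}$ is a linear combination of coordinate functions, and $\p\Sigma$ is \emph{balanced}, $\int_{\p\Sigma}x\,d\mathcal H^{k-1}=0$ — which follows from the first variation formula for $\Sigma$ with constant vector fields and the free boundary condition $\eta=x$ — so the first-order term vanishes. (Equivalently, the mean curvature vector of $\p\Sigma$ in $S^{n-1}$ is $-A^\Sigma(x,x)$ and one checks $\int_{\p\Sigma}A^\Sigma(x,x)\cdot\nu=0$.) Since $G'(0)=0$, the conjecture would follow from concavity of $G$ on $[0,1]$: a concave function with a critical point at the left endpoint is non-increasing. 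The seed case $G''(0)\le 0$ is exactly the second-order statement proved in Section~\ref{section:volume} of this paper.

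The main obstacle is passing from $G''(0)\le 0$ to $G''(t)\le 0$ for all $t\in(0,1)$. By the group property $G''(t)$ is the second variation of the boundary volume of $\tilde\Sigma=f_t(\Sigma)$ under conformal motions fixing $f_t$ to first order, but $\tilde\Sigma$ is neither minimal nor orthogonal to $S^{n-1}$, so the computation underlying the second-order theorem — which uses minimality and the free boundary condition to discard several terms — does not transfer. One would need either a version of the second-order inequality valid on the deformed submanifolds $f_t(\Sigma)$ with explicit control of the extra terms carried by $\vec H_{f_t(\Sigma)}$, or an auxiliary quantity monotone along the flow that dominates $G$. A natural candidate for the latter is to couple $G$ with the interior volume $\int_{\Sigma}u_t^{\,k}\,dv$ (the object of Question~\ref{question:boundaryvolume}), exploiting the identity $|\p\Sigma|=k|\Sigma|$ at $t=0$ together with a first-variation relation between the two functionals, so that one differential inequality governs both; obtaining the correct sign in that inequality is, in my view, the crux of the problem.
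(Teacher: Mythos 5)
The statement you are addressing is stated in the paper as an open conjecture: the paper does not prove it, and offers only the evidence you cite (the $k=2$ case of Theorem \ref{theorem:length}, the cone case via Theorem \ref{theorem:lyei}, and the second-order statement of Theorem \ref{theorem:2nd}). Your proposal is therefore correctly calibrated in spirit, but it is not a proof, and you say so yourself: the entire argument hinges on concavity of $G(t)=|f_t(\p\Sigma)|$ along the full conformal flow, and all you can actually establish is $G'(0)=0$ (balancing, via the first variation with a constant vector field) and $G''(0)\le 0$ (which is exactly Theorem \ref{theorem:2nd}, not new input). The step from ``reduced to second order at $t=0$'' to ``nonincreasing on $[0,1]$'' is the whole content of the conjecture, and your reduction of it to $G''(t)\le 0$ for $t>0$ is not a reduction to anything easier: the computation of Theorem \ref{theorem:2nd} uses the first variation identity $\int_\Sigma \mbox{div}_\Sigma V=\int_{\p\Sigma}V\cdot x$, which requires minimality of the underlying submanifold, and $f_t(\Sigma)$ is not minimal when $k\geq 3$, so an extra mean curvature term $-\int_{f_t(\Sigma)}\vec H\cdot V$ appears with no evident sign. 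The alternative you float (coupling $G$ with $\int_\Sigma u_t^k$ and finding a monotone quantity) is a plausible research direction but is asserted, not carried out; as you note, getting the sign there is the crux, i.e. the open problem itself.

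Two smaller corrections. First, your claim that $\tilde\Sigma=f_t(\Sigma)$ is ``neither minimal nor orthogonal to $S^{n-1}$'' is half wrong: conformal automorphisms of the ball preserve the sphere and preserve angles, so $f_t(\Sigma)$ still meets $S^{n-1}$ orthogonally; what is genuinely lost for $k\geq 3$ is minimality (this is also why the $k=2$ proof of Theorem \ref{theorem:length} can be applied at a single $f$ with no flow: only the minimality of the original $\Sigma$ enters, and $|f(\p\Sigma)|=\int_{\p\Sigma}u\,ds$ is linear in $u$). Second, your diagnosis of why the one-shot vector field argument fails for $k\geq 3$ (nonlinearity of $u^{k-1}$, and the natural candidates built from $\nabla|x-y|^{2-k}$ producing the wrong boundary term) is accurate and matches why the paper leaves this as a conjecture, but it should be presented as motivation, not as progress. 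In summary: the known partial results are correctly assembled, the $G'(0)=0$ observation is valid, but there is a genuine gap exactly where you flag it, so the statement remains open.
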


Evidence for this conjecture is suggested by the following. By Theorem \ref{theorem:length} it is true for $k=2$. In the special case of a cone, Theorem \ref{theorem:lyei} implies the conjecture. Moreover, it is true in general that the boundary volume is reduced up to second order, as we prove in Theorem \ref{theorem:2nd} below.
Finally, a consequence of the conjecture is that $|\Sigma|\geq |B^k|$. This inequality was shown recently by a direct monotonicity-style argument by Brendle \cite{Br}.

\begin{theorem} \label{theorem:2nd}
Let $\Sigma$ be a $k$ dimensional minimal submanifold in the ball $B^n$ with $\p\Sigma\subset S^{n-1}$ and with the conormal vector of $\Sigma$ equal to the position vector. Given any $v \in S^{n-1}$, let $f_t$ be the one parameter family of conformal transformations of the ball generated by the gradient of the linear function in the direction $v$. Then,
\[
      \left.\frac{d^2}{dt^2}\right|_{t=0} |f_t(\p \sig)| 
      =-(k-1)k \int_{\p \sig} |v^\perp|^2 \; dv \;\leq \;0.
\]
\end{theorem}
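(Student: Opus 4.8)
The plan is to compute the second variation of boundary length along the conformal flow $f_t$ directly, reusing the first-variation machinery from the proof of Theorem \ref{theorem:length}. Recall that the conformal flow generated by the gradient of the linear function $x\cdot v$ on $B^n$ is the one-parameter family of special conformal transformations fixing $\pm v$; its generating vector field on $\mathbb R^n$ is $X(x) = v - (x\cdot v)\,x$, which is tangent to $S^{n-1}$ along $\partial\Sigma$. Writing $u_t$ for the conformal (length-magnification) factor of $f_t$ on $S^{n-1}$, so that $|f_t(\partial\Sigma)| = \int_{\partial\Sigma} u_t^{(n-1)/(n-1)}\,ds$ — more precisely $|f_t(\partial\Sigma)|=\int_{\partial\Sigma}\mu_t\,ds$ where $\mu_t$ is the line-element stretch of $f_t|_{\partial\Sigma}$ — the first derivative at $t=0$ vanishes (this is the infinitesimal form of Theorem \ref{theorem:length}, or can be seen since $f_0=\mathrm{id}$ is the maximizer), and the whole content is the second derivative.

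The key steps, in order. First, I would identify the conformal factor explicitly: along the flow one has $\frac{d}{dt}\log u_t = -2(x\cdot v)\circ f_t$ restricted to the relevant points, so at $t=0$, $\dot u_0 = -2(x\cdot v)$ and one computes $\ddot u_0$ by differentiating again, picking up a $(x\cdot v)^2$ term plus a term from $\dot f_0 = X$. Second, I would express the boundary-length second variation as $\frac{d^2}{dt^2}\big|_0|f_t(\partial\Sigma)| = \int_{\partial\Sigma}\ddot\mu_0\,ds$, where $\mu_t$ is the tangential stretch factor, and relate $\dot\mu_0,\ddot\mu_0$ to the tangential part of $X$ and its derivative along $\partial\Sigma$. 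Third — and this is where the minimal/free-boundary hypothesis enters — I would use the first variation formula $\int_\Sigma \mathrm{div}_\Sigma V = \int_{\partial\Sigma} V\cdot x\,ds$ with a cleverly chosen $V$ built from $v$ (e.g. $V$ = the component of a suitable vector field, as in the proof of Theorem \ref{theorem:length} where $V=(x-y)/|x-y|^2$ was used — here one wants the infinitesimal version, differentiating that identity in $y$ in the direction $v$) to convert the boundary integral of the "bad" terms into an interior integral that manifestly has a sign, or vanishes. The decomposition $|v|^2 = |v^\top|^2 + |v^\perp|^2$ on $\partial\Sigma$ (tangential/normal to $\Sigma$) together with $|v^\top|^2 = |v|^2 - |v^\perp|^2$ should be used to collect everything into the stated multiple of $\int_{\partial\Sigma}|v^\perp|^2$.

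Concretely, I expect the computation to reduce to showing
\[
  \left.\frac{d^2}{dt^2}\right|_{t=0}|f_t(\partial\Sigma)|
  = \int_{\partial\Sigma}\Big( a\,(x\cdot v)^2 + b\,|\nabla^{\partial\Sigma}(x\cdot v)|^2 + c\,|v^\top|^2\Big)\,ds
\]
for explicit constants $a,b,c$ coming from steps one and two, and then using the first variation formula applied to $V = \tfrac12\nabla^{\mathbb R^n}(x\cdot v)^2 = (x\cdot v)\,v$ (whose $\Sigma$-divergence and boundary values are computable since $\Sigma$ is minimal and meets $S^{n-1}$ orthogonally) to eliminate the $(x\cdot v)^2$ and tangential-gradient terms, leaving $-(k-1)k\int_{\partial\Sigma}|v^\perp|^2$. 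The main obstacle will be the bookkeeping in the second step: correctly differentiating the arclength stretch factor $\mu_t$ twice along a flow that is conformal on $S^{n-1}$ but whose restriction to the curve $\partial\Sigma$ is being tracked, and keeping straight which derivatives are along $\partial\Sigma$, which are normal in $\Sigma$, and which are normal to $\Sigma$ in $\mathbb R^n$ — in particular getting the coefficient $(k-1)k$ rather than some other quadratic-in-$k$ expression requires care about how the conformal factor on the $(k-1)$-dimensional boundary $\partial\Sigma$ relates to the ambient conformal factor and to the second fundamental form of $\partial\Sigma\subset S^{n-1}$. A clean way to organize this is to note $|f_t(\partial\Sigma)| = \int_{\partial\Sigma}\big(u_t\circ\iota\big)^{?}\cdots$ is not quite right because $f_t$ need not preserve the curve; instead parametrize $\partial\Sigma$ and compute $|\partial_s(f_t\circ\gamma)|$ directly, which turns the problem into an honest second-derivative-of-a-norm computation where the minimality is invoked only at the end via the first variation identity.
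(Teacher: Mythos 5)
Your overall strategy is the paper's: differentiate the boundary volume along the flow of $X=v-(x\cdot v)x$ using conformality on $S^{n-1}$, and then use the free boundary first variation identity $\int_\Sigma \mathrm{div}_\Sigma V=\int_{\partial\Sigma}V\cdot x$ with a vector field built from $v$ to identify the answer. But as written the decisive computation is left open (your constants $a,b,c$, and you yourself flag the coefficient $(k-1)k$ as unresolved), and the parts you do commit to contain slips that would derail it. First, your worry that ``$f_t$ need not preserve the curve'' is unfounded and causes you to miss the clean identity that makes everything work: since $f_t$ is a conformal diffeomorphism of $S^{n-1}$ and $\partial\Sigma\subset S^{n-1}$ is $(k-1)$-dimensional, one has exactly $|f_t(\partial\Sigma)|=\int_{\partial\Sigma}u_t^{\,k-1}\,dv$ where $u_t$ is the length-magnification factor; it is the exponent $k-1$ (not a ``line-element stretch'', which is only the case $k=2$) that produces the factors of $k-1$. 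Second, with $\ell=x\cdot v$ the correct rates are $\dot u_0=-\ell$ (your $-2\ell$ is the rate of the metric factor $u_t^2$) and $\ddot u_0=\ell^2-X(\ell)$, and the key simplification is pointwise, with no integration by parts: $X(\ell)=|\nabla^{S^{n-1}}\ell|^2=1-\ell^2$. This gives directly
\[
\left.\frac{d^2}{dt^2}\right|_{t=0}|f_t(\partial\Sigma)|=-(k-1)\int_{\partial\Sigma}\bigl(1-k\ell^2\bigr)\,dv ,
\]
which is where the paper arrives (via $\mathrm{div}_{f_t(\partial\Sigma)}X=-(k-1)\,\ell\circ f_t$); there is no separate $|\nabla^{\partial\Sigma}\ell|^2$ term to eliminate.

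Third, your proposed $V=(x\cdot v)v=\ell v$ is only half of the final step: it gives $\int_\Sigma|v^\top|^2\,dv=\int_{\partial\Sigma}\ell^2\,dv$, which converts the $\ell^2$ term into an \emph{interior} integral but leaves the constant term $\int_{\partial\Sigma}1$ untouched; you also need $V=x$ (i.e. $k|\Sigma|=|\partial\Sigma|$). The paper does both at once with $V=x-k\ell v$, for which $\mathrm{div}_\Sigma V=k|v^\perp|^2$ and $V\cdot x=1-k\ell^2$ on $\partial\Sigma$, so that $\int_{\partial\Sigma}(1-k\ell^2)\,dv=k\int_\Sigma|v^\perp|^2\,dv$ — note the right-hand side is an integral over $\Sigma$, which is what actually comes out of the divergence theorem and is manifestly nonnegative; your claim that $V=\ell v$ alone ``leaves'' a boundary integral of $|v^\perp|^2$ does not follow pointwise (on $\partial\Sigma$ one only has $1-\ell^2\geq|v^\perp|^2$, not equality). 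Finally, your side remark that the first derivative vanishes ``since $f_0$ is the maximizer'' is circular for $k>2$ (Theorem \ref{theorem:length} is two-dimensional; the general case is precisely the open conjecture); the correct reason is $\int_{\partial\Sigma}x_i\,dv=\int_\Sigma\Delta_\Sigma x_i\,da=0$ by harmonicity of the coordinates and the free boundary condition, though this is not needed for the stated second-derivative formula.
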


\begin{proof}
Let $\Sigma$ be a $k$ dimensional minimal submanifold in the ball $B^n$ with $\p\Sigma\subset S^{n-1}$ and with the conormal vector of $\Sigma$ equal to the position vector. Given $v \in S^{n-1}$, let $\ell=x \cdot v$, where $x$ is the position vector, and let $X = \n^{S^{n-1}} \ell=v^{\top^{S^{n-1}}}$, the component of $v$ tangent to $S^{n-1}$. Then,
\begin{align*}
     \frac{d}{dt} |f_t(\p \sig)| &= \int_{\p \sig} \mbox{div}_{f_t(\p \sig)} X \; dv_t \\
                                             &= -(k-1) \int_{\p \sig} \ell \; dv_t
\end{align*}
and
\begin{align} \label{eq:2nd}
    \frac{d^2}{dt^2} |f_t(\p \sig)| &= -(k-1) \int_{\p\sig} \left[ X(\ell) - (k-1) \ell^2 \right] \; dv_t  \no\\
    &= -(k-1) \int_{\p\sig} \left[ |\nabla^{S^{n-1}} \ell |^2 - (k-1) \ell^2 \right] \; dv_t  \\ 
    &= -(k-1) \int_{\p\sig} \left[ (1-\ell^2) - (k-1) \ell^2 \right] \; dv_t \no\\ 
    &= -(k-1) \int_{\p \sig} (1-k \ell^2) \; dv_t, \no
\end{align}
where in the third equality we have used 
$|\n^{S^{n-1}} \ell |^2=| v^{\top^{S^{n-1}}} |^2=1 - (v\cdot x)^2= 1-\ell^2$.
Set $V=x-k\ell v$. Then $V \cdot x = 1 - k \ell^2$,
and by the first variation formula,
\[
     \int_\sig \mbox{div}_\sig V = \int_{\p \sig} V \cdot x = \int_{\p \sig} 1-k \ell^2.
\]
On the other hand,
\[
      \mbox{div}_\sig V = k - k |v^\top|^2 = k |v^\perp|^2,
\]
where $v^\top$ and $v^\perp$ denote the components of $v$ tangent and normal to $\sig$.
Therefore,
\[
    \int_{\p \sig} (1 - k \ell^2) \; dv = k \int_{\p \sig} |v^\perp|^2 \; dv.
\]
Substituting this in (\ref{eq:2nd}), we obtain
\[
    \left.\frac{d^2}{dt^2} |f_t(\p \sig)|\right|_{t=0} = -(k-1)k \int_{\p \sig} |v^\perp|^2 \; dv \;\leq \;0.
\]    
\end{proof}
This shows that the boundary volume of a free boundary minimal submanifold $\sig$ in the ball is reduced up to second order under conformal images, however the following weaker question than Question \ref{question:boundaryvolume}, of whether the volume of $\sig$ itself is reduced up to second order, remains open:\begin{question}
Suppose $\sig$ is a $k$-dimensional free boundary minimal submanifold in $B^n$, and $X$ is a conformal Killing vector field in $B^n$. Is it true that $\delta^2 \sig (X,X) \leq 0$?
\end{question}
On the other hand, by  a subtle argument, we were able to show that the second variation of volume of $\sig$ is nonpositive for the vector fields $v^\perp$, for any $v \in \mathbb{R}^n$ (\cite{FS2} Theorem 3.1).
\begin{theorem}
If $\sig^{k}$ is a free boundary minimal submanifold in $B^n$ and $v\in\R^n$, then
we have
\[ 
     \delta^2 \sig(v^\perp,v^\perp)=-k\int_{\sig} |v^\perp|^2\ dv.
\]
If $\sig$ is not contained in a product $\sig_0 \times \mathbb{R}$ where $\sig_0$ is a free boundary solution, then the Morse index of $\sig$ is at least $n$.
In particular, if $k=2$ and $\sig$ is not a plane disk, its index is at least $n$.

\end{theorem}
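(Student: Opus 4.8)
The plan is to compute $\delta^2\Sigma(v^\perp,v^\perp)$ directly from the second variation formula for the $k$-volume of $\Sigma$ under the normal deformation $v^\perp$, then integrate by parts using the free boundary condition and the minimality of $\Sigma$. First I would recall that for a normal vector field $W$ along a minimal submanifold with free boundary in $B^n$, the second variation of volume has the form
\[
   \delta^2\Sigma(W,W)=\int_\Sigma\bigl(|\nabla^\perp W|^2-|A\cdot W|^2\bigr)\,dv
      -\int_{\p\Sigma}\mathrm{II}^{\p B^n}(W,W)\,ds,
\]
where the boundary term comes from the second fundamental form of $S^{n-1}$ in $\R^n$, which is just the identity on the sphere, so $\mathrm{II}^{\p B^n}(W,W)=|W|^2$ on $\p\Sigma$ (using that the conormal of $\Sigma$ is the position vector). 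Then I would take $W=v^\perp$, the normal component of the constant vector field $v$, and use the standard fact that for a minimal submanifold the constant vector field $v$ splits as $v^\top+v^\perp$ with $v^\top$ a vector field satisfying $\nabla_{e_i}v^\top = (A(e_i,\cdot))^\sharp\cdot v^\perp$ and $\nabla^\perp_{e_i}v^\perp = -A(e_i,v^\top)$; this is what makes the bulk integrand collapse.

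The key algebraic step is to recognize $|\nabla^\perp v^\perp|^2 - |A\cdot v^\perp|^2$ as a divergence. Concretely, from $\nabla^\perp_{e_i}v^\perp=-A(e_i,v^\top)$ one gets $|\nabla^\perp v^\perp|^2=\sum_i|A(e_i,v^\top)|^2$, while $|A\cdot v^\perp|^2=\sum_{i,j}\langle A(e_i,e_j),v^\perp\rangle^2$; these are generally not equal pointwise, so the collapse must happen after integration. The cleanest route is to instead compute $\delta^2\Sigma(v^\perp,v^\perp)$ as the second $t$-derivative of $|\phi_t(\Sigma)|$ where $\phi_t$ is the flow of the conformal Killing field whose normal part is $v^\perp$ (the gradient of the linear function $x\cdot v$), exactly as in the proof of Theorem \ref{theorem:2nd} but now for $\Sigma$ rather than $\p\Sigma$. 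Since $\Sigma$ is only stationary (not stable) this flow changes the volume; one writes $\frac{d}{dt}|\phi_t(\Sigma)|=\int_\Sigma \mathrm{div}_\Sigma(\nabla(x\cdot v))$ and differentiates again, handling the tangential part $v^\top$ of the variation field separately since it only reparametrizes and contributes via the boundary. Using $\mathrm{div}_\Sigma v = k|v^\perp|^2$ (which appears already in the excerpt) and the first variation identity with $V$ chosen analogously to the proof above, the tangential contributions and boundary terms should be arranged to cancel, leaving exactly $-k\int_\Sigma|v^\perp|^2\,dv$.

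For the index statement: once we know $\delta^2\Sigma(v^\perp,v^\perp)=-k\int_\Sigma|v^\perp|^2\,dv<0$ whenever $v^\perp\not\equiv 0$, the fields $\{(e_a)^\perp\}_{a=1}^n$ for a basis $e_1,\dots,e_n$ of $\R^n$ span a subspace on which the index form is negative definite, provided they are linearly independent as normal fields. Linear independence fails only if some nonzero $v$ has $v^\perp\equiv 0$, i.e. $v$ is everywhere tangent to $\Sigma$; a parallel tangent constant vector field forces $\Sigma$ to split off an $\R$ factor $\Sigma_0\times\R$ with $\Sigma_0$ free boundary, which is the excluded case. Hence the index is at least $n$; when $k=2$ and $\Sigma$ is not a plane disk it cannot split as $\Sigma_0\times\R$ (a $1$-dimensional free boundary solution in a ball is a diameter, whose product with $\R$ is a flat disk), so the index is at least $n$ in that case too.

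The main obstacle I expect is the bookkeeping in the second variation computation: showing that the pointwise-nonnegative-looking terms, together with the tangential reparametrization terms and the sphere's boundary second fundamental form, assemble into the clean expression $-k\int_\Sigma|v^\perp|^2$. This is the "subtle argument" alluded to in the text, and the right choice of the auxiliary vector field $V$ in the first variation formula (paralleling $V=x-k\ell v$ from Theorem \ref{theorem:2nd}, but adapted to the interior integral) is what makes it work.
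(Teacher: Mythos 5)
Your proposal stalls exactly at the step that carries all the content of the theorem, and the route you choose to get around it does not compute the right quantity. (Note also that this survey states the result as a citation to \cite{FS2}, Theorem 3.1, rather than proving it here, so the comparison is with that argument.) Your setup is correct: the free boundary index form is $\delta^2\Sigma(W,W)=\int_\Sigma\bigl(|\nabla^\perp W|^2-\sum_{i,j}\langle A(e_i,e_j),W\rangle^2\bigr)\,dv-\int_{\partial\Sigma}|W|^2\,ds$, and $\nabla^\perp_{e_i}v^\perp=-A(e_i,v^\top)$. But the ``cleanest route'' you then propose is flawed. The gradient of $x\cdot v$ is the constant field $v$, whose flow is a translation; translations are ambient isometries, so $|\phi_t(\Sigma)|$ is literally constant in $t$ (your remark that ``since $\Sigma$ is only stationary this flow changes the volume'' is false), and the second derivative is $0$, not $\delta^2\Sigma(v^\perp,v^\perp)$: the discrepancy is precisely the boundary term $-\int_{\partial\Sigma}|v^\perp|^2$, which belongs to the index form of the problem constrained to keep $\partial\Sigma$ on $S^{n-1}$, a constraint translations violate. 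If instead you take the genuine conformal flow of the ball, as in Theorem \ref{theorem:2nd}, its velocity field $X$ has normal component different from $v^\perp$ at interior points, so the second derivative of $|f_t(\Sigma)|$ computes the index form on $X$, not on $v^\perp$ --- and whether that quantity is nonpositive is exactly the open Question stated in the paper right after Theorem \ref{theorem:2nd}. Either way, the cancellation of ``tangential contributions and boundary terms'' that you hope will leave $-k\int_\Sigma|v^\perp|^2$ is not identified, and no choice of auxiliary field $V$ is exhibited; this is the subtle point, not bookkeeping.

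The mechanism that actually makes the computation work is different: because translations are isometries of $\R^n$, the field $v^\perp$ is a Jacobi field, $\Delta^\perp v^\perp+\sum_{i,j}\langle A(e_i,e_j),v^\perp\rangle A(e_i,e_j)=0$, so integrating $|\nabla^\perp v^\perp|^2$ by parts collapses the interior part of the index form and leaves only boundary terms: $\delta^2\Sigma(v^\perp,v^\perp)=\int_{\partial\Sigma}\bigl(\langle\nabla^\perp_\eta v^\perp,v^\perp\rangle-|v^\perp|^2\bigr)\,ds=-\int_{\partial\Sigma}\bigl(\langle A(\eta,v^\top),v^\perp\rangle+|v^\perp|^2\bigr)\,ds$. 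One must then convert this boundary integral into $-k\int_\Sigma|v^\perp|^2\,dv$ using the free boundary condition $\eta=x$ together with divergence/first-variation identities on $\Sigma$ and on $\partial\Sigma$; here the analogue of the field $V=x-k\ell v$ from Theorem \ref{theorem:2nd} enters, via $\mathrm{div}_\Sigma V=k|v^\perp|^2$ and $V\cdot x=1-k\ell^2$. None of this appears in your proposal beyond the (correct) suspicion that such a field is needed. By contrast, your treatment of the index statement is essentially right: the formula shows the index form is negative on every nonzero element of the image of the linear map $v\mapsto v^\perp$, and a nonzero $v$ with $v^\perp\equiv 0$ gives a parallel unit tangent field, forcing $\Sigma$ to lie in a product $\Sigma_0\times\R$ with $\Sigma_0$ a free boundary solution; you should still write out why the parallel field forces the splitting and why the two-dimensional case excludes everything except the flat disk, but these are minor compared with the gap above.
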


This index estimate for free boundary minimal submanifolds in the ball plays a key role in our sharp bounds for the first nonzero Steklov eigenvalue on the annulus and M\"obius band from \cite{FS2}, which we survey in the next section.

\section{Sharp Steklov eigenvalue bounds} \label{section:survey}
 
 In this section we give a brief overview of our recent results from \cite{FS2} on existence of extremal metrics and sharp bounds for the first Steklov eigenvalue. If we fix a surface $M$ of genus $\gamma$ with $k$ boundary components, we can define
\[
           \sigma^*(\gamma,k)=\sup_g \,\sigma_1(g)\,L_g(\p M)
\]
where the supremum is over all smooth metrics on $M$.
By a result of Weinstock \cite{W},  $\sigma^*(0,1)=2\pi$, and the supremum is achieved uniquely up to $\sigma$-homothety by the Euclidean disk $D$. 
In general, we have the coarse upper bound
\[
        \sigma^*(\gamma,k) \, \leq \, \min\{2 \pi ( \gamma +k), \  8\pi[(\gamma+3)/2]\},
\]
which is a combination \cite{FS1} Theorem 2.3 and \cite{KN} Proposition 1. A basic question is: What is the value of $\sigma^*(\gamma,k)$ for other surfaces? Does a maximizing metric exist?

In \cite{FS2} we show that for any compact surface $M$ with boundary, a smooth maximizing metric $g$ exists on $M$ provided the conformal structure is controlled for any metric near the maximum. For surfaces of genus zero with arbitrarily many boundary components we prove boundedness of the conformal structure for nearly maximizing metrics. Thus, we have the following existence result for maximizing metrics on surfaces of genus zero.
\begin{theorem} \label{theorem:existence}
For any $k\geq 1$ there exists a smooth metric $g$ on the surface of genus $0$ with $k$ boundary components with the property $\sigma_1(g)L_g(\p M)=\sigma^*(0,k)$.
\end{theorem}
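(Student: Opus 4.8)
The plan is to combine a general existence criterion for maximizing metrics — which reduces the problem to controlling the conformal structure of a nearly maximizing sequence — with a separate argument, specific to genus zero, that rules out degeneration of the conformal structure. First I would set up a maximizing sequence: metrics $g_j$ on the genus zero surface $M$ with $k$ boundary components, normalized (using $\sigma$-homothety) so that $L_{g_j}(\p M)=1$ and $\sigma_1(g_j) \to \sigma^*(0,k)$. Passing to the conformal class, I would pick the hyperbolic (or, for $k=1,2$, the appropriate model) representative in each class, so that the essential data is the conformal structure, i.e. a point in the moduli space of genus zero surfaces with $k$ boundary components. The key general fact to invoke is that if the conformal structures of a nearly maximizing sequence stay in a compact region of moduli space, then after passing to a subsequence they converge to a limiting conformal structure, and on that limit one can produce a smooth maximizing metric; this is the mechanism from \cite{FS2} alluded to in the paragraph preceding the theorem. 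So the whole theorem reduces to: \emph{the conformal structures of a nearly maximizing sequence on genus zero with $k$ boundary components cannot degenerate.}

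The second and harder step is this non-degeneration. Degeneration of a genus zero surface with $k$ boundary components means a simple closed geodesic (in the hyperbolic metric) whose length tends to zero, which topologically pinches $M$ into two pieces, each of genus zero with fewer boundary components (counting the new boundary created by the pinch). The strategy I would use is a \emph{neck/capacity estimate}: a thin collapsing neck carries very little of the $\sigma_1$-extremal behavior, because test functions that are locally constant on each side with a transition across the long thin neck have small Dirichlet energy relative to the boundary $L^2$ norm, forcing $\sigma_1$ of the degenerating surface to be controlled by a convex combination (or similar comparison) of the $\sigma_1 L$ values of the two limiting pieces. One then wants to conclude $\sigma^*(0,k) \le \max$ over the ways of splitting $k$ into $k_1 + k_2$ with $k_1,k_2 < k$ of something built from $\sigma^*(0,k_1)$ and $\sigma^*(0,k_2)$, and combine this with the coarse upper bound $\sigma^*(0,k) \le 2\pi k$ together with lower bounds for $\sigma^*(0,k)$ (for instance, explicit test metrics — e.g. based on flat cylinders or unions of disks joined by thin tubes — showing $\sigma^*(0,k)$ is strictly larger than any such split value) to reach a contradiction. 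Thus a degenerating maximizing sequence would have $\sigma_1 L$ bounded away from $\sigma^*(0,k)$, which is impossible.

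The main obstacle I anticipate is precisely making the neck estimate quantitative and matching it against a sharp enough lower bound for $\sigma^*(0,k)$: one needs the strict inequality that prevents the supremum from being attained ``at infinity'' in moduli space, and this requires either an explicit family of comparison metrics realizing $\sigma_1 L$ strictly above the best split value, or a monotonicity of $\sigma^*(0,k)$ in $k$ with strict increments. A clean way to organize this is to prove, by gluing a thin passage onto an extremal (or near-extremal) configuration for fewer boundary components, that $\sigma^*(0,k) > \sigma^*(0,k-1)$ strictly, and more generally that $\sigma^*(0,k)$ strictly exceeds any quantity obtained from a pinching split; then the neck estimate shows a degenerating sequence has limsup of $\sigma_1 L$ at most that split quantity, contradicting near-maximality. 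Once degeneration is excluded, compactness of the relevant part of moduli space and the regularity theory for the limiting extremal metric (continuity/semicontinuity of $\sigma_1$ under the chosen normalization, plus elliptic regularity for the Dirichlet-to-Neumann eigenfunctions and the structure result Proposition \ref{prop:extremal}) finish the proof that the maximizing metric $g$ exists and is smooth.
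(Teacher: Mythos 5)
Your outline matches the paper's own treatment of this theorem: the paper (deferring to \cite{FS2}) argues in exactly the two steps you propose — a general criterion producing a smooth maximizing metric whenever the conformal structures of nearly maximizing metrics are controlled, plus a genus-zero argument excluding degeneration of the conformal structure — and your mechanism for the second step (a neck/capacity estimate for pinching sequences combined with strict monotonicity of $\sigma^*(0,k)$ in $k$, which the paper records later as part of its survey) is the same one used there. The only caveat is that the paper gives no details beyond this outline, leaving the substantial analytic work (the general existence/regularity criterion and the quantitative degeneration estimate) to \cite{FS2}, which are precisely the parts you also invoke as black boxes or flag as anticipated obstacles.
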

In the case of the annulus and the M\"obius band, we explicitly characterize the maximizing metrics. 
Recall that for any free boundary minimal surface in the ball, the coordinate functions are Steklov eigenfunctions with eigenvalue 1. In the case of the critical catenoid and the critical M\"obius band, the coordinate functions are in fact {\em first} Steklov eigenfunctions (\cite{FS1} section 3, \cite{FS2} Proposition 7.1). Moreover, we show that these are the only free boundary annuli and M\"obius bands such that the coordinate functions are first eigenfunctions:
\begin{theorem}
Assume that $\Sigma$ is a free boundary minimal annulus in $B^n$ such that the coordinate functions are first eigenfunctions. Then $n=3$ and $\Sigma$ is the critical catenoid.
\end{theorem}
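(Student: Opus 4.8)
The plan is to uniformize, recast all hypotheses as a boundary value problem on a flat cylinder, reduce (this is the hard step) to the case in which the Steklov boundary density is constant on each boundary circle, and then use the conformality relation to identify $\Sigma$. First, since a compact surface with boundary of annular type is conformally a flat cylinder $C_T=[0,T]\times S^1$ for a unique modulus $T>0$, with coordinates $(t,\theta)$, $\theta\in\mathbb R/2\pi\mathbb Z$, and since harmonicity and conformality are conformally invariant in dimension two, the coordinate functions $x=(x_1,\dots,x_n)\colon C_T\to B^n$ satisfy $x_{tt}+x_{\theta\theta}=0$, $|x_t|=|x_\theta|=:\lambda$, and $x_t\cdot x_\theta=0$, with induced metric $g=\lambda^2(dt^2+d\theta^2)$. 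As the outward unit conormal is $\mp\lambda^{-1}\partial_t$ on $\{t=0\}$ and $\{t=T\}$, the free boundary condition together with the fact that the $x_i$ are Steklov eigenfunctions for the eigenvalue $1$ reads: $|x|^2=1$ and $x\cdot x_\theta=0$ on $\partial C_T$, $x_t=-\lambda x$ on $\{t=0\}$, and $x_t=\lambda x$ on $\{t=T\}$; the remaining hypothesis, that the $x_i$ are \emph{first} eigenfunctions, says exactly that $\sigma_1=1$ for the flat Steklov problem on $C_T$ with boundary density $\lambda|_{\partial C_T}=|x_\theta|$.

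The crux is to show that $\lambda$ is constant on each of the two boundary circles. I would argue by contradiction using the variational characterization of $\sigma_1$: expanding a harmonic function on $C_T$ in angular modes $e^{ik\theta}$ forces its $t$-profiles to be $\alpha e^{kt}+\beta e^{-kt}$ (affine when $k=0$); if the boundary density were nonconstant, or if the $x_i$ carried an angular mode of order $\ge 2$, one should be able to project onto the offending mode and, comparing with the explicitly solvable one-dimensional reduced problems, build a test function with Rayleigh quotient strictly below $1$, contradicting $\sigma_1=1$. Once $\lambda$ is constant (say $\equiv c$) on each circle, the problem decouples over Fourier modes and the spectrum is explicit: mode $|k|=m\ge1$ contributes $\tfrac m c\tanh(\tfrac{mT}{2})$ and $\tfrac mc\coth(\tfrac{mT}{2})$, while mode $0$ contributes only $\tfrac 2{cT}$ (the case of two different constants on the two circles is handled the same way and is excluded at the end). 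Hence $\sigma_1$ is realized only by the mode $|k|=1$ eigenfunctions $\cosh(t-\tfrac T2)\cos\theta$, $\cosh(t-\tfrac T2)\sin\theta$ and possibly the mode $0$ function $t-\tfrac T2$, so $\dim E_1\le 3$.

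We then identify $\Sigma$. We may assume $\Sigma$ is full in $B^n$ (otherwise replace $B^n$ by a sub-ball), so the coordinate functions are linearly independent and $\dim E_1\ge n$; since a minimal annulus cannot lie in $B^2$, also $n\ge3$. Thus $n=3$, $\dim E_1=3$, and the mode $0$ and mode $1$ eigenvalues must coincide: $\tfrac2{cT}=\tfrac1c\tanh(\tfrac T2)$, i.e. $T\tanh(\tfrac T2)=2$, which is exactly $\tfrac T2=T_0$ with $T_0=\coth T_0$, the value appearing in the definition of the critical catenoid. Each $x_i$ then has the form $a_i\cosh(t-T_0)\cos\theta+b_i\cosh(t-T_0)\sin\theta+d_i(t-T_0)$; writing $A=(a_i)$, $B=(b_i)$, $D=(d_i)\in\mathbb R^3$, the conformality relation $\sum_i x_{i,z}^2=0$ (with $z=t+i\theta$), after expansion in $e^{\pm(z-T_0)}$ and $e^{\pm2(z-T_0)}$, is equivalent to $A,B,D$ being mutually orthogonal of a common length $s>0$. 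Hence $x=sR\,(\cosh(t-T_0)\cos\theta,\ \cosh(t-T_0)\sin\theta,\ t-T_0)$ for some $R\in O(3)$, so $\Sigma$ is, up to an ambient rotation, a rescaling of the catenoid restricted to $t\in[-T_0,T_0]$; the condition $|x|^2=1$ on $\partial C_T$ fixes $s$ uniquely, and $\Sigma$ is the critical catenoid in $B^3$.

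The main obstacle is the second step: showing the boundary density is constant on each circle. A nonconstant density couples all the angular Fourier modes through the Steklov boundary condition, so one cannot simply diagonalize; the needed ingredient is a quantitative test-function argument, using the explicit one-dimensional reduced Rayleigh quotients, showing that any such coupling — or any angular mode of order $\ge2$ carried by the coordinate functions — would push $\sigma_1$ strictly below the value $1$ that the coordinate functions realize. Everything else is uniformization, an elementary dimension count, and the explicit solution of the conformality relation.
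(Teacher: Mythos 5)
Your setup (uniformization to a flat cylinder, the translation of the free boundary and eigenvalue-one conditions, and the explicit Fourier-mode endgame once the boundary density is constant) is correct, and your final computation showing that conformality forces the coefficient vectors $A,B,D$ to be mutually orthogonal of equal length is right. But the proof has a genuine gap exactly where you flag it, and the gap is not peripheral: it is the entire content of the theorem. You never prove that the conformal factor $\lambda$ is constant on each boundary circle, nor that the coordinate functions carry no angular mode of order $\ge 2$; you only assert that one "should be able to" build a test function with Rayleigh quotient strictly below $1$. That is not an argument, and it is far from clear it can be made into one: a nonconstant weight $\lambda|_{\partial C_T}$ couples all Fourier modes in the boundary condition, the weight is itself an unknown determined by the map (it equals $|x_\theta|$), and there is no a priori reason a weighted Steklov problem on a cylinder with nonconstant density cannot have $\sigma_1=1$. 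Any successful argument must exploit the full coupled system (harmonicity, conformality, $|x|=1$ and $x_t=\pm\lambda x$ on the boundary) simultaneously, which a mode-by-mode Rayleigh-quotient comparison does not do. Worse, your conclusion $n\le 3$ rests on $\dim E_1\le 3$, which you derive from the explicit constant-density spectrum, so even the dimension count sits downstream of the unproven step; the case of two different constants on the two circles is likewise waved off.

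Note also that the present paper states this theorem without proof; it is surveyed from \cite{FS2}, where the argument is structured differently. There the bound $n\le 3$ comes first, from a general multiplicity bound (at most $3$) for the first Steklov eigenvalue on genus-zero surfaces with boundary, proved by a Courant nodal-domain argument that makes no assumption on the conformal factor; the subsequent identification of the surface inside $B^3$ is a separate and delicate geometric argument rather than a direct spectral comparison against a nonconstant weight. So the step you leave open is precisely the hard part of the theorem, and the route you sketch for closing it is unlikely to work as stated.
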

\begin{theorem}
Assume that $\Sigma$ is a free boundary minimal M\"obius band in $B^n$ such that the coordinate functions are first eigenfunctions. Then $n=4$ and $\Sigma$ is the critical M\"obius band.
\end{theorem}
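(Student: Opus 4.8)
The plan is to pass to the orientation double cover, write down explicit conformal coordinates, and reduce the problem to an analysis of $\iota$-equivariant harmonic maps in which the hypothesis that the eigenvalue is \emph{first} dictates which Fourier modes may occur. Since $\Sigma$ is a M\"obius band, its orientation double cover is conformally a flat cylinder $C_T=[-T,T]\times S^1$ for a unique $T>0$, with $\Sigma=C_T/\iota$ for $\iota(t,\theta)=(-t,\theta+\pi)$; the two circles $\{t=\pm T\}$ are interchanged by $\iota$, so $\p\Sigma$ is a single circle. The immersion $u$ lifts to an $\iota$-invariant conformal harmonic immersion $\tu=(\tu_1,\dots,\tu_n):C_T\to B^n$ with $|\tu|^2\equiv1$ on $\{t=\pm T\}$ and, by the free boundary condition, $\tu_t=e^{\psi}\tu$ on $\{t=T\}$, where $e^{2\psi}=|\tu_\theta|^2=|\tu_t|^2$ is the conformal factor of the induced metric along the boundary. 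Each $\tu_j$ is harmonic, so its $k$-th Fourier mode in $\theta$ lies in the span of $e^{\pm kt}$ (and of $1,t$ when $k=0$); imposing $\tu_j\circ\iota=\tu_j$ kills the $t$-term and forces the $k$-th mode of $\tu_j$ to be a real combination of $\sinh(kt)\cos k\theta$ and $\sinh(kt)\sin k\theta$ for $k$ odd, and of $\cosh(kt)\cos k\theta$ and $\cosh(kt)\sin k\theta$ for $k$ even.

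Differentiating in $t$ and restricting to $t=T$ gives $\p_t\tu|_{t=T}=\sum_k\mu_k m_k(\theta)$, where $m_k(\theta)\in\R^n$ is the mode-$k$ part of $\tu$ on the boundary and $\mu_k=k\coth(kT)$ for $k$ odd, $\mu_k=k\tanh(kT)$ for $k$ even, $\mu_0=0$. Assuming $e^{\psi}$ is constant along $\p\Sigma$ (the crucial point, addressed below), the relation $\tu_t=e^{\psi}\tu$ together with $\sum_j\tu_j^2\equiv1$ on $\{t=T\}$ and linear independence of distinct Fourier modes shows that $e^{\psi}$ equals a single constant $\mu$ and that only modes $k$ with $\mu_k=\mu$ appear; since $k\mapsto k\coth(kT)$ and $k\mapsto k\tanh(kT)$ are strictly increasing, at most one odd and one even mode occur, and the $k=0$ mode is absent because $\tu_j$ is $L^2(\p\Sigma)$-orthogonal to the constants. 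A map built from a single mode degenerates on the core circle $\{t=0\}$, so exactly one odd mode $k_1$ and one even mode $k_2$ occur, with $\mu=k_1\coth(k_1T)=k_2\tanh(k_2T)$. Now invoke $\sigma_1(\Sigma)=1$: with $e^{\psi}$ constant the Steklov spectrum of $\Sigma$ decomposes by mode, the mode-$1$ and mode-$2$ eigenvalues being $\mu^{-1}\coth T$ and $\mu^{-1}2\tanh 2T$, which must be $\ge\sigma_1=1$; since $\mu=k_1\coth(k_1T)\ge\coth T$ with equality iff $k_1=1$, and $\mu=k_2\tanh(k_2T)\ge 2\tanh 2T$ with equality iff $k_2=2$, we get $k_1=1$, $k_2=2$, hence $\mu=\coth T=2\tanh 2T$, i.e.\ $\cosh 2T=2$, which determines $T=T_0$ uniquely.

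It remains to identify the surface. Write the mode-$1$ and mode-$2$ parts of $\tu$ as $\sinh t\,(\vec a\cos\theta+\vec b\sin\theta)$ and $\cosh 2t\,(\vec c\cos 2\theta+\vec d\sin 2\theta)$ with $\vec a,\vec b,\vec c,\vec d\in\R^n$. Matching the conformality identities $|\tu_t|^2=|\tu_\theta|^2$ and $\tu_t\cdot\tu_\theta=0$ coefficient by coefficient in powers of $e^{t}$ forces $|\vec a|=|\vec b|>0$, $|\vec c|=|\vec d|>0$, $\vec a\perp\vec b$, $\vec c\perp\vec d$ and $\mathrm{span}(\vec a,\vec b)\perp\mathrm{span}(\vec c,\vec d)$; hence the image of $\tu$ lies in a $4$-dimensional subspace, so $n=4$, and after a rotation of $\R^4$ the conditions $|\tu|^2\equiv1$ and $e^{\psi}=|\tu_\theta|$ on $\{t=T_0\}$ determine the remaining two amplitudes, yielding exactly the parametrization of the critical M\"obius band in $B^4$.

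The step I expect to be the main obstacle is showing that $e^{\psi}$ is rotationally symmetric, i.e.\ independent of $\theta$: without this the boundary relation $\sum_k\mu_k m_k(\theta)=|\tu_\theta(T,\theta)|\sum_k m_k(\theta)$ does not decouple by modes, and the modal decomposition of the Steklov spectrum used above is unavailable. I would attempt to obtain this either through a Hersch/Nadirashvili-style balancing argument applied to the first eigenfunctions, or by showing directly that any $\theta$-dependence of $e^{\psi}$ yields an admissible test function with Rayleigh quotient below $1$, contradicting $\sigma_1(\Sigma)=1$. Everything after rotational symmetry is an explicit finite computation.
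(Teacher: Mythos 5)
The part of your argument that comes after the symmetry assumption is correct: the parity analysis of the $\iota$-invariant Fourier modes, the monotonicity of $k\coth(kT)$ and $k\tanh(kT)$ forcing $k_1=1$, $k_2=2$ and $\coth T=2\tanh 2T$ (equivalently $\cosh 2T=2$), and the coefficient matching in the conformality relations, which does force $\vec a,\vec b,\vec c,\vec d$ to be nonzero and mutually orthogonal with the length ratios then pinned down by the constant term and the boundary conditions, hence $n=4$ and congruence to the critical M\"obius band. In effect this is the classification of the rotationally symmetric solutions, and it is the easy half of the theorem.

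The genuine gap is the step you defer to the end: the claim that the boundary conformal factor $e^{\psi}$ is independent of $\theta$ (equivalently, that the immersion is $S^1$-equivariant). This is not a patchable technicality; it is the entire content of the uniqueness statement. Without it the boundary relation $\partial_t\tilde u = e^{\psi}\tilde u$ couples all Fourier modes, the coordinate functions need not be finite trigonometric sums, and the comparison of $\mu$ with the spectrum of the flat M\"obius band is unavailable. Neither of your proposed remedies amounts to a proof: a Hersch/Nadirashvili-type balancing argument produces conformal reparametrizations achieving orthogonality to constants, which you already have from the eigenfunction hypothesis, not constancy of the boundary conformal factor; and the assertion that any $\theta$-dependence of $e^{\psi}$ yields an admissible test function with Rayleigh quotient below $1$ is precisely the statement to be proved, with no construction offered. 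Note also that the present paper does not prove this theorem: it is quoted from \cite{FS2}, where the proof is genuinely different and substantially more involved --- the hypothesis that the coordinates are \emph{first} eigenfunctions is exploited through finer properties of the first Steklov eigenspace and of free boundary minimal surfaces to derive the dimensional and symmetry restrictions rather than assuming them, and the explicit identification you carry out is only the final, routine step. As written, your proposal establishes the weaker statement that a free boundary minimal M\"obius band with $\theta$-independent boundary conformal factor whose coordinate functions are first eigenfunctions must be the critical M\"obius band in $B^4$.
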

By Theorem \ref{theorem:existence} there exists a maximizing metric on the annulus and on the M\"obius band. By Proposition \ref{prop:extremal} these maximizing metrics are $\sigma$-homothetic to the induced metrics from free boundary minimal immersions of the annulus and M\"obius band in $B^n$. But then by the minimal surface uniqueness results above, these immersions must be congruent to the critical catenoid and critical M\"obius band. Thus we have the following sharp eigenvalue bounds and explicit characterizations of maximizing metrics on the annulus and M\"obius band.
\begin{theorem}
For any metric annulus $M$ 
\[
         \sigma_1L\leq (\sigma_1L)_{cc}
\]
with equality if and only if $M$ is $\sigma$-homothetic to the critical catenoid. 
In particular,
\[
     \sigma^*(0,2)=(\sigma_1L)_{cc}\approx 4\pi/1.2.
\]
\end{theorem}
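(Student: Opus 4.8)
The plan is to combine the three ingredients already assembled in the excerpt, each of which does essentially all the work. First, Theorem~\ref{theorem:existence} guarantees the existence of a smooth metric $g$ on the annulus $M$ (the genus-$0$ surface with $k=2$ boundary components) realizing $\sigma_1(g)L_g(\p M)=\sigma^*(0,2)$. So to prove the stated inequality $\sigma_1 L \le (\sigma_1 L)_{cc}$ for an arbitrary metric annulus, it suffices to identify every \emph{maximizing} metric: since the maximum is attained, any competitor satisfies $\sigma_1(g')L_{g'}(\p M) \le \sigma^*(0,2)$, and we need only show that the supremum equals $(\sigma_1 L)_{cc}$ and that equality forces $\sigma$-homothety with the critical catenoid.

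Second, I would apply Proposition~\ref{prop:extremal} with $k=1$ to the maximizing metric $g$. That proposition produces independent first Steklov eigenfunctions $u_1,\dots,u_n$ giving a conformal minimal immersion $u=(u_1,\dots,u_n):M\to B^n$ whose image is a free boundary solution, with $u$ an isometry on $\p M$ after rescaling. Thus $g$ is $\sigma$-homothetic to the induced metric on a free boundary minimal annulus $\Sigma = u(M)$ in some $B^n$, and crucially the coordinate functions of $\Sigma$ \emph{are first Steklov eigenfunctions} (not merely eigenfunctions with eigenvalue $1$). Third, I would invoke the uniqueness theorem stated just above: a free boundary minimal annulus in $B^n$ whose coordinate functions are first eigenfunctions must have $n=3$ and must be congruent to the critical catenoid. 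Composing these, $\Sigma$ is (congruent to) the critical catenoid, so $g$ is $\sigma$-homothetic to the critical catenoid and $\sigma^*(0,2)=(\sigma_1 L)_{cc}$. Conversely, any metric $\sigma$-homothetic to the critical catenoid has the same normalized eigenvalue $(\sigma_1 L)_{cc}$ by the conformal invariance recorded in the bulleted remark after the definition of $\sigma$-homothety, so it is maximizing; and the equality statement follows because equality $\sigma_1 L = (\sigma_1 L)_{cc}$ means $g$ is itself maximizing, hence the argument above applies.

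For the final numerical assertion, I would recall that for the critical catenoid the value $(\sigma_1 L)_{cc}$ can be computed explicitly from the parametrization in the Critical Catenoid example: $T_0$ is the unique positive root of $t=\coth t$, the boundary length and the first Steklov eigenvalue of the induced metric are elementary functions of $T_0$, and one evaluates numerically to get $(\sigma_1 L)_{cc}\approx 4\pi/1.2$. This is a routine computation and I would not belabor it.

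The main obstacle is not in the present theorem at all — by the time we reach this statement, the three hard results (existence of a maximizer, the structure Proposition~\ref{prop:extremal} for $\sigma_1$, and the rigidity characterization of the critical catenoid among free boundary annuli) have each been established, so the proof is essentially a bookkeeping assembly: check that the maximum is attained, feed the maximizing metric through Proposition~\ref{prop:extremal}, note that the eigenvalue is exactly $\sigma_1$ (so the coordinate functions are \emph{first} eigenfunctions, which is what the rigidity theorem requires), apply rigidity, and run the converse via conformal invariance. The only subtlety worth stating carefully is the logical direction: one must be sure that the structure proposition's output $\sigma_k(g_0)=1$ corresponds to $\sigma_1$ and not to some higher eigenvalue that coincides with it — but since we invoked the $k=1$ case, $\sigma_1(g_0)$ is by definition the first nonzero eigenvalue, and there is nothing below it to worry about.
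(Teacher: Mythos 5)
Your proposal is correct and follows essentially the same route as the paper: combine the existence of a maximizing metric on the genus-zero surface with two boundary components (Theorem \ref{theorem:existence}), the structure result for maximizing metrics (Proposition \ref{prop:extremal} with $k=1$, so the resulting coordinate functions are genuinely first eigenfunctions), and the uniqueness theorem for free boundary minimal annuli embedded by first eigenfunctions, with the equality case and the converse handled via the conformal invariance of $\sigma_1 L$ under $\sigma$-homothety. The closing numerical identification $(\sigma_1L)_{cc}=4\pi/T_0\approx 4\pi/1.2$ is indeed the routine computation you indicate.
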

\begin{theorem}
For any metric M\"obius band $M$ 
\[
       \sigma_1L\leq (\sigma_1L)_{cmb}=2\pi\sqrt{3}
\]
with equality if and only if $M$ is $\sigma$-homothetic to the critical M\"obius band. 
\end{theorem}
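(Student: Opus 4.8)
The plan is to assemble three facts that are already available --- existence of a smooth maximizing metric, the structure theorem Proposition~\ref{prop:extremal}, and the uniqueness theorem for free boundary minimal M\"obius bands whose coordinate functions are first Steklov eigenfunctions --- and then to carry out the one genuinely new computation, namely the explicit value $(\sigma_1 L)_{cmb} = 2\pi\sqrt{3}$. Everything reduces to showing that a maximizer exists, that it is $\sigma$-homothetic to the critical M\"obius band, and that the critical M\"obius band has $\sigma_1 L = 2\pi\sqrt{3}$.

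First I would apply Theorem~\ref{theorem:existence}, in the form (established in \cite{FS2}) that also covers the M\"obius band, to obtain a smooth metric $g_0$ on the M\"obius band $M$ realizing $\sup_g \sigma_1(g)L_g(\p M)$. Then Proposition~\ref{prop:extremal} with $k=1$ produces independent first Steklov eigenfunctions $u_1,\dots,u_n$ giving a conformal minimal immersion $u=(u_1,\dots,u_n)\colon M\to B^n$ onto a free boundary solution $\Sigma=u(M)$, with $u$ an isometry on $\p M$ after a constant rescaling of $g_0$. Hence $\Sigma$ is a free boundary minimal M\"obius band in $B^n$ whose coordinate functions are first Steklov eigenfunctions, so by the uniqueness theorem quoted above $n=4$ and $\Sigma$ is congruent to the critical M\"obius band. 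Therefore $g_0$ is $\sigma$-homothetic to the critical M\"obius band, and $\sup_g \sigma_1(g)L_g(\p M) = (\sigma_1 L)_{cmb}$.

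Next I would compute $(\sigma_1 L)_{cmb}$ directly from the parametrization $\varphi(t,\theta)=(2\sinh t\cos\theta,\,2\sinh t\sin\theta,\,\cosh 2t\cos2\theta,\,\cosh 2t\sin2\theta)$ given above. Since the coordinate functions on $\Sigma_{cmb}\subset B^4$ are first Steklov eigenfunctions with eigenvalue $1$, we have $(\sigma_1 L)_{cmb}=L(\p\Sigma_{cmb})$. One computes $|\varphi|^2=4\sinh^2 t+\cosh^2 2t$ and $|\varphi_\theta|^2=4\sinh^2 t+4\cosh^2 2t$, both constant along $t=T_0$, so after rescaling the radius $|\varphi(T_0,\cdot)|$ to $1$ one gets $L(\p\Sigma_{cmb})=4\pi\sqrt{\sinh^2 T_0+\cosh^2 2T_0}\big/\sqrt{4\sinh^2 T_0+\cosh^2 2T_0}$. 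The defining equation $\coth T_0=2\tanh 2T_0$ reduces to $\cosh 2T_0=4\sinh^2 T_0$, which together with $\cosh 2T_0=1+2\sinh^2 T_0$ gives $\sinh^2 T_0=\frac{1}{2}$ and $\cosh 2T_0=2$; substituting yields $L(\p\Sigma_{cmb})=2\pi\sqrt{3}$.

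Finally, for an arbitrary metric M\"obius band $(M,g)$ we have $\sigma_1(g)L_g(\p M)\le\sup_g\sigma_1(g)L_g(\p M)=2\pi\sqrt{3}$. Equality forces $g$ to be a maximizing metric, and the argument above shows any maximizer is $\sigma$-homothetic to the critical M\"obius band; conversely, since $\sigma_1 L$ is invariant under $\sigma$-homothety, any metric $\sigma$-homothetic to the critical M\"obius band attains $2\pi\sqrt{3}$. I expect the real difficulty to lie entirely upstream of this assembly: in the uniqueness theorem for free boundary minimal M\"obius bands (which rests on the index bound of Section~\ref{section:volume} and an ODE analysis of the Hopf differential) and in establishing boundedness of the conformal structure for nearly maximizing metrics on the M\"obius band so that Theorem~\ref{theorem:existence} applies. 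Granting those, only the bookkeeping above and the explicit length computation remain.
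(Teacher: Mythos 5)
Your proposal follows the paper's argument exactly: existence of a smooth maximizer (Theorem \ref{theorem:existence}, extended to the M\"obius band as in \cite{FS2}), the structure result Proposition \ref{prop:extremal} with $k=1$, and the uniqueness theorem for free boundary minimal M\"obius bands by first eigenfunctions, assembled in the same way the paper does. Your explicit computation of $(\sigma_1L)_{cmb}$ (using $\coth T_0=2\tanh 2T_0\Rightarrow\sinh^2T_0=\tfrac12$, $\cosh 2T_0=2$, giving $2\pi\sqrt{3}$ after rescaling) is correct and simply makes explicit a value the paper quotes from \cite{FS2}.
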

For surfaces of genus 0 and $k \geq 3$ boundary components we are not able to explicitly characterize the maximizing metrics, but we show that the metrics arise from free boundary surfaces in $B^3$ which are embedded and star-shaped with respect to the origin, and we analyze the limit as $k$ goes to infinity.
\begin{theorem}
The sequence $\sigma^*(0,k)$ is strictly increasing in $k$ and converges to $4\pi$ as $k$ tends to infinity. For each $k$ a maximizing metric is achieved by a free boundary minimal surface $\Sigma_k$ in $B^3$ of area less than $2\pi$. The limit of these minimal surfaces as $k$ tends to infinity is a double disk.
\end{theorem}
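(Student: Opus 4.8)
The plan is to combine the existence theorem (Theorem~\ref{theorem:existence}) and the structure theorem (Proposition~\ref{prop:extremal}, applied to the first eigenvalue) with a sharp area bound and a degeneration analysis. Let $M$ be the surface of genus $0$ with $k$ boundary components. Theorem~\ref{theorem:existence} provides a smooth metric $g_k$ with $\sigma_1(g_k)L_{g_k}(\p M)=\sigma^*(0,k)$, and Proposition~\ref{prop:extremal} shows that $g_k$ is $\sigma$-homothetic to the induced metric on a free boundary minimal immersion $u\colon M\to B^{n_k}$ whose coordinate functions are first Steklov eigenfunctions. The first step is to improve this: using that $M$ is planar, together with a multiplicity bound for the first Steklov eigenvalue and nodal-domain considerations, one shows that one may take $n_k=3$ and that $\Sigma_k=u(M)\subset B^3$ is embedded and star-shaped about the origin (this is carried out in \cite{FS2}). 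Since the coordinate functions have Steklov eigenvalue $1$ on $\Sigma_k$ we get $\sigma^*(0,k)=\sigma_1(\Sigma_k)\,L(\p\Sigma_k)=|\p\Sigma_k|$, and the identity $|\p\Sigma_k|=2|\Sigma_k|$ (from the first variation formula, as in the proof of Theorem~\ref{theorem:length}) shows that the claimed bound $|\Sigma_k|<2\pi$ is equivalent to $\sigma^*(0,k)<4\pi$.

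The strict upper bound $\sigma^*(0,k)<4\pi$ is the main obstacle. A naive conformal test-function argument --- balancing the coordinate functions of an ambient $S^2$ after realizing $M$ conformally as a domain $\Omega\subset S^2$ --- only gives $\sigma_1 L\le 2|\Omega|<8\pi$, which is too weak. Instead one uses the full geometry of $\Sigma_k$: writing the embedded, star-shaped surface $\Sigma_k$ as a radial graph $\{r(\omega)\omega:\omega\in\Omega_k\}$ over a proper subdomain $\Omega_k\subset S^2$, so that $|\Sigma_k|=\int_{\Omega_k}r\sqrt{r^2+|\n^{S^2}r|^2}\,d\omega$, and combining the minimal surface equation for $r$ with the orthogonality condition along $\p\Omega_k$, one bounds $|\Sigma_k|$ strictly below $2\pi$. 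The strictness matters for the limit: it comes from rigidity in this estimate, since equality would force $\Sigma_k$ to be a union of two disks, which has the wrong number of boundary components when $k$ is finite. This estimate is the heart of \cite{FS2}.

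It remains to prove strict monotonicity, convergence to $4\pi$, and to identify the limit. For $\sigma^*(0,k+1)\ge\sigma^*(0,k)$ one takes a near-maximizing metric on $M$, removes a small disk from the interior, and chooses the metric near the new boundary circle so that $L$ increases while $\sigma_1$ changes by an arbitrarily small amount; a capacity estimate makes this precise, and equality is excluded in \cite{FS2} by an analysis of the degenerate case, giving strict monotonicity. For the lower bound one constructs comparison metrics on $M$ supported on a configuration that degenerates, in the limit, to two disjoint flat unit disks, for which $\sigma_1 L\to 2\cdot 2\pi=4\pi$; combined with the upper bound, the monotone sequence $\sigma^*(0,k)$ converges to $4\pi$, equivalently $|\Sigma_k|\to 2\pi$. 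Finally, since each $\Sigma_k$ is stationary for deformations preserving the ball and the areas $|\Sigma_k|$ are bounded, a subsequence converges as integral varifolds to a free boundary stationary $2$-varifold $\Sigma_\infty\subset\bar B^3$ with mass $2\pi$; using that an equatorial disk has area $\pi$, that $\Sigma_\infty$ is balanced, and the genus-$0$ and boundary structure, a rigidity argument identifies $\Sigma_\infty$ as a double equatorial disk through the origin. The two places where I expect real work are the strict area bound $|\Sigma_k|<2\pi$ and this final varifold rigidity; the monotonicity and the lower-bound construction are comparatively routine on top of Theorem~\ref{theorem:existence}.
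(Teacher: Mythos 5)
You should note at the outset that this paper states the theorem only as a survey of results from \cite{FS2} and gives no proof, so your attempt has to stand on its own; it does not. At exactly the points you yourself flag as decisive you cite \cite{FS2}, which is the source of the theorem, so nothing is actually established. Concretely: your reduction $\sigma^*(0,k)=\sigma_1(\Sigma_k)L(\partial\Sigma_k)=|\partial\Sigma_k|=2|\Sigma_k|$ is correct, so the area bound is indeed equivalent to $\sigma^*(0,k)<4\pi$, but the proposed proof of that bound --- writing the star-shaped surface as a radial graph $\{r(\omega)\omega\}$ and ``combining the minimal surface equation with the orthogonality condition'' --- is not carried out and no inequality is derived from it; it is a guess at an argument, not an argument. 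Similarly, the identification of the varifold limit as the multiplicity-two disk is asserted via an unspecified ``rigidity argument,'' and the strictness of the monotonicity $\sigma^*(0,k+1)>\sigma^*(0,k)$ is explicitly deferred to \cite{FS2}. These three items (the $4\pi$ bound, strict monotonicity, and the double-disk limit) are the entire content of the theorem beyond the existence and structure results you are allowed to quote.

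There is also a genuine error in the one step you describe as routine, the lower bound $\lim_k\sigma^*(0,k)\ge 4\pi$. A sequence of connected genus-zero surfaces degenerating to two \emph{disjoint} unit disks does not have $\sigma_1L\to 4\pi$: the odd test function ($+1$ on one sheet, $-1$ on the other, interpolated across the connecting necks) has Dirichlet energy comparable to the neck capacities, which tends to zero, while its boundary $L^2$ norm stays of order one, so $\sigma_1\to 0$; consistently, the disconnected limit has $\sigma_1=0$ since the eigenvalue $0$ has multiplicity two. To push the first eigenvalue toward $1$ one must degenerate to the \emph{coincident} double disk with the number of boundary components (necks distributed along the boundary circle) tending to infinity, and for each fixed $k$ one only reaches values strictly below $4\pi$; this interplay is precisely where the work lies, and it also explains why the strict inequality $\sigma^*(0,k)<4\pi$ cannot be obtained from a soft limiting statement. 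So while your outline correctly mirrors the overall shape of the argument in \cite{FS2} (existence plus the structure theorem, reduction to embedded star-shaped surfaces in $B^3$, area--length identities, monotone convergence, varifold compactness), it proves none of the substantive claims and, where it does commit to details, the details as stated would fail.
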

As a consequence, we have the following existence theorem for free boundary minimal surfaces in the ball.
 \begin{corollary}
 For every $k\geq 1$ there is an embedded minimal surface in $B^3$ of genus $0$ with $k$ boundary components satisfying the free boundary condition. Moreover these surfaces are embedded by first eigenfunctions.
 \end{corollary}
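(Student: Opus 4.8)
The plan is to deduce the corollary by assembling three ingredients that are already in hand: the existence statement (Theorem~\ref{theorem:existence}), the structural description of maximizers of the normalized first Steklov eigenvalue (Proposition~\ref{prop:extremal} applied with $k=1$), and the genus-zero structure theory recorded in the preceding theorem on $\sigma^*(0,k)$, which confines the maximizing free boundary surface to $B^3$ and guarantees that it is embedded and star-shaped with respect to the origin.

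Concretely, fix $k\ge 1$ and let $M$ be the compact orientable surface of genus $0$ with $k$ boundary components. First I would invoke Theorem~\ref{theorem:existence} to obtain a smooth metric $g$ on $M$ with $\sigma_1(g)L_g(\p M)=\sigma^*(0,k)$; thus $g$ maximizes $\sigma_1 L$ among all smooth metrics on $M$. Next I would apply Proposition~\ref{prop:extremal} in the case $k=1$: it produces independent first Steklov eigenfunctions $u_1,\dots,u_n$ of $(M,g)$ such that $u=(u_1,\dots,u_n)\colon M\to B^n$ is a conformal minimal immersion whose image is a free boundary solution, and such that, after the harmless rescaling of $g$ within its $\sigma$-homothety class, $u|_{\p M}$ is an isometry. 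Hence the induced metric $u^*\delta$ is $\sigma$-homothetic to $g$, the image $\Sigma:=u(M)$ is a free boundary minimal surface in $B^n$, and the coordinate functions of this immersion are by construction first Steklov eigenfunctions of the induced metric --- this is exactly the ``embedded by first eigenfunctions'' clause, once embeddedness has been established.

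The remaining points --- that one may take $n=3$, that $\Sigma$ is embedded, and that it has genus $0$ with $k$ boundary components --- come from the genus-zero structure theory underlying the preceding theorem on $\sigma^*(0,k)$, where the maximizing free boundary surface is shown to lie in $B^3$ and to be embedded and star-shaped with respect to the origin. Since $u$ is then a diffeomorphism of $M$ onto the embedded surface $\Sigma\subset B^3$, the latter inherits the topology of $M$. I expect the only real subtlety to be bookkeeping: one must identify the metric whose existence is furnished by Theorem~\ref{theorem:existence} with the metric analyzed in the genus-zero theory, so that the a priori dimension $n$ from Proposition~\ref{prop:extremal} is pinned down to $3$ and the immersion promoted to an embedding. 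The genuinely hard input --- control of the conformal structure for nearly maximizing metrics on genus-zero surfaces, together with the embeddedness and star-shapedness --- is precisely what has already been packaged into Theorem~\ref{theorem:existence} and the preceding theorem, so the corollary itself requires no further estimates beyond this synthesis.
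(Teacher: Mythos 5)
Your proposal is correct and matches the paper's intended argument: the corollary is presented there as an immediate consequence of Theorem~\ref{theorem:existence}, Proposition~\ref{prop:extremal} with $k=1$, and the preceding theorem's assertion (proved in \cite{FS2}) that the maximizing surface lies in $B^3$ and is embedded and star-shaped. You correctly identify that the only nontrivial inputs are already packaged into those cited results, so the synthesis you describe is exactly what the paper does.
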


\bibliographystyle{plain}

\end{document}